\definecolor{mycolor}{rgb}{0.122, 0.435, 0.698}
\newmdenv[innerlinewidth=0.5pt, roundcorner=4pt,linecolor=mycolor,innerleftmargin=6pt,
innerrightmargin=6pt,innertopmargin=6pt,innerbottommargin=6pt]{mybox}
\definecolor{mycolor}{rgb}{0.122, 0.435, 0.698}
\newcommand{\redsout}{\bgroup\markoverwith{\textcolor{red}{\rule[0.5ex]{2pt}{.4pt}}}\ULon}
\newcommand{\LV}{\left|}
\newcommand{\RV}{\right|}
\newcommand{\LC}{\left(}
\newcommand{\RC}{\right)}
\newcommand{\LA}{\left<}
\newcommand{\RA}{\right>}
\newcommand{\p}{\partial}
\newenvironment{keywords}{\noindent{\bf Key words.}\small}{\par\vspace{1ex}}
\newenvironment{AMS}{\noindent{\bf AMS subject classifications 2010.}\small}{\par}
\numberwithin{equation}{section}
\newtheorem{theorem}{Theorem}[section]
\newtheorem{corollary}{Corollary}[section]
\newtheorem{proposition}{Proposition}[section]
\newtheorem{lemma}{Lemma}[section]
\newtheorem{remark}{Remark}[section]
\newcommand{\R}{\mathbb R}
\title{Single pixel X-ray Transform and Related Inverse Problems}
\author{
	Ru-Yu Lai\thanks{School of Mathematics, University of Minnesota, Minneapolis, MN 55455, USA; 	\href{mailto:rylai@umn.edu}{rylai@umn.edu}
	}
	\and
	Gunther Uhlmann\thanks{Department of Mathematics, University of Washington, Seattle, WA 98195, USA; HKUST Jockey Club Institute for Advanced Study, HKUST, Clear Water Bay, Kowloon, Hong Kong;
		\href{mailto:gunther@math.washington.edu}{gunther@math.washington.edu}
	}
	\and
	       Jian Zhai\thanks{School of Mathematical Sciences, Fudan University, Shanghai 200433, China;
		\href{mailto:jianzhai@fudan.edu.cn}{jianzhai@fudan.edu.cn}
		}	
	\and
	Hanming Zhou\thanks{Department of Mathematics, University of California Santa Barbara, Santa Barbara, CA 93106, USA; 
		\href{mailto:hzhou@math.ucsb.edu}{hzhou@math.ucsb.edu}
		}
}
\begin{document}
\maketitle

\date{}
 
\begin{abstract}
In this paper, we analyze the nonlinear single pixel X-ray transform $K$ and study the reconstruction of $f$ from the measurement $Kf$. Different from the well-known X-ray transform, the transform $K$ is a nonlinear operator and uses a single detector that integrates all rays in the space. We derive stability estimates and an inversion formula of $K$.  
We also consider the case where we integrate along geodesics of a Riemannian metric.
Moreover, we conduct several numerical experiments to corroborate the theoretical results.

\vskip2cm
\end{abstract}
\begin{keywords}
	 X-ray transform, single pixel X-ray transform, inverse problems
\end{keywords}
\begin{AMS}
	 35R30.
\end{AMS}

\section{Introduction}
In this paper, we study the \textit{single pixel X-ray transform} $K$ defined by 
$$
Kf(x) :=\int_{\mathbb{S}^{n-1}} e^{-Xf(x,\theta)} \,d\theta,  
$$
whose exterior integral integrates over the entire unit sphere $\mathbb{S}^{n-1}$ in $\R^n$, $n\geq2$. The inner function consists of an exponential function and the conventional X-ray transform $X$ defined by
\begin{align}\label{DEF:Xray}
	Xf(x,\theta) := \int_\ell f\,ds=\int_\R f(x+s\theta) \,ds ,\qquad (x,\theta)\in\R^n\times\mathbb{S}^{n-1},
\end{align}
where $\ell=x+s\theta$ is a line passing through a point $x\in\R^n$, in the direction $\theta\in\mathbb{S}^{n-1}$.

The standard X-ray transform consists of recovering a function supported in a bounded domain from its integrals along straight lines through this domain. In dimension two ($n=2$), it coincides with the Radon transform \cite{Radon1917}, which provides the theoretical underpinning for several medical imaging techniques such as Computed Tomography (CT) and Positron Emission Tomography (PET). The X-ray transform has been extensively studied, including its uniqueness, stability estimates and reconstruction formula, see for example, the book \cite{Natterer2001book}. Generalizations of the standard X-ray transform include integrals of tensor fields or along curved lines. We refer to recent survey papers \cite{IM2019, PSU2014} and the references therein for more details.

A notable difference between the X-ray transform $X$ and the single pixel X-ray transform $K$ is the nonlinearity due to the exponential function.
We will discuss later how this nonlinearity of $K$ plays a crucial role in practical applications and why it introduces difficulties in the reconstruction.

The objective of this paper is to recover $f$ from the data $Kf$ by establishing a reconstruction formula and deriving stability estimates. The global uniqueness of the inverse problem was proved by the second author in \cite{KDMV16} relying on the monotonic property of $K$ and the known uniqueness of X-ray transform $X$ \cite{Helgason, Natterer2001book}. 
Due to the special structure of the transform $K$, however, it is not clear that if the same technique in \cite{KDMV16} can be directly applied to the study of stability estimates and reconstruction formulas.

\subsection{Motivation}
The single pixel X-ray transform finds applications in protection of information in highly sensitive systems. The nonlinearity of the transform $K$ acts as a shield against the disclosure of such information. Here the exponential function is chosen to be the nonlinear function in $K$ since attenuation is naturally exponential in space. Specifically, this nonlinearity ensures that there is no one-to-one correspondence between the density $f$ and the true mass $\int_{\mathbb{S}^{n-1}}\int_\R f(x+s\theta)dsd\theta$ and, therefore, $f$ cannot be estimated from a single projection. This thus protects the detailed information of the system such as its structure and composition while the global uniqueness result can still provide a theoretical validation of the system's authenticity.

\subsection{Main results}
The transform $K$ is nonlinear and a monotone decreasing map due to the exponential function in the definition. The nonlinearity of the transform helps secure information; on the other hand, it also introduces difficulties to the mathematical and practical reconstruction of $f$.

As mentioned earlier, the global uniqueness of $K$ was proved in \cite{KDMV16}. However, the inversion formula of $K$, to the best of the authors' knowledge, have not been derived and stability estimate has not yet been investigated.  
The first result we study here is to establish a reconstruction formula of $K$ provided that $f=\varepsilon g$ is sufficiently small for a real-valued number $\varepsilon>0$. 
	
The monotonicity of $K$ implies that when $f$ is increasing, the measurement $Kf$ becomes decreasing and could be eventually very small, which makes it challenging to distinguish the true measurement from noise if the noise does exist. Hence we consider small density $f$ so that the measurement $Kf$ will not be too small in this setting.

Let $\Omega$ be an open bounded domain in $\mathbb R^n$, $n> 2$. We define the space  $$\mathcal{M}_k:=\{f\in C^k(\R^n):\, f\hbox{ is supported in $\overline\Omega$}\}.$$

\begin{theorem}[Inversion formula]\label{thm:formula}
     Let $\Omega$ be an open bounded domain in $\mathbb{R}^n$, $n\geq 2$ with smooth boundary.
	For any $g\in\mathcal{M}_0$, assume that we know $K[\varepsilon g]$ for all $\varepsilon\in \mathbb R$ sufficiently close to zero, then
	$$
	g= - c_n|D| (\p_\varepsilon|_{\varepsilon=0} K[\varepsilon g]),
	$$
	where $c_n=(2\pi|\mathbb{S}^{n-2}|)^{-1}$ and $D=(-\Delta)^{1/2}$ is the square root of Laplacian, which is a pseudo-differential operator. 
\end{theorem}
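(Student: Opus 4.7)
The plan is to linearize $K$ at $\varepsilon=0$, identify the resulting linear operator with (a constant multiple of) the Riesz potential of order one, and then invert it via $|D|$.

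\textbf{Step 1 (linearization).} I would differentiate
\[
K[\varepsilon g](x)=\int_{\mathbb{S}^{n-1}} e^{-\varepsilon Xg(x,\theta)}\,d\theta
\]
in $\varepsilon$ under the integral sign and evaluate at $\varepsilon=0$ to obtain
\[
\p_\varepsilon|_{\varepsilon=0} K[\varepsilon g](x) \;=\; -\int_{\mathbb{S}^{n-1}} Xg(x,\theta)\,d\theta.
\]
Since $g\in\mathcal M_0$ has compact support, $Xg(x,\theta)$ and the integrand are uniformly bounded in $(x,\theta)$ for $\varepsilon$ near $0$, which justifies the interchange of $\p_\varepsilon$ and $\int_{\mathbb{S}^{n-1}}$.

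\textbf{Step 2 (polar coordinates).} I would next rewrite the spherical integral as a Riesz-type convolution. Using the involution $(s,\theta)\mapsto(-s,-\theta)$ together with the symmetry of the surface measure on $\mathbb{S}^{n-1}$, one has
\[
\int_{\mathbb{S}^{n-1}} Xg(x,\theta)\,d\theta \;=\; 2\int_{\mathbb{S}^{n-1}}\!\int_0^\infty g(x+s\theta)\,ds\,d\theta.
\]
Introducing polar coordinates $y=x+s\theta$, $s=|y-x|$, with Jacobian $s^{n-1}$, converts this into
\[
\int_{\mathbb{S}^{n-1}} Xg(x,\theta)\,d\theta \;=\; 2\int_{\R^n} \frac{g(y)}{|x-y|^{n-1}}\,dy.
\]

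\textbf{Step 3 (Riesz inversion).} I would recognize the right-hand side as a multiple of the Riesz potential $I_1 g(x)=\gamma_n(1)^{-1}\int_{\R^n} g(y)|x-y|^{-(n-1)}\,dy$, where $\gamma_n(1)=\pi^{n/2}\cdot 2\cdot \Gamma(1/2)/\Gamma((n-1)/2)=\pi\,|\mathbb{S}^{n-2}|$. Because $I_1=(-\Delta)^{-1/2}=|D|^{-1}$ as a Fourier multiplier, this identifies
\[
-\p_\varepsilon|_{\varepsilon=0} K[\varepsilon g] \;=\; 2\pi\,|\mathbb{S}^{n-2}|\,|D|^{-1} g.
\]
Applying $|D|$ to both sides and dividing by $2\pi|\mathbb{S}^{n-2}|$ produces the claimed formula with $c_n=(2\pi|\mathbb{S}^{n-2}|)^{-1}$.

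The only genuinely delicate point is the bookkeeping of the dimensional constant $\gamma_n(1)$ and the verification that the identity $|D|\,I_1 g=g$ applies to $g\in\mathcal M_0$ rather than to Schwartz functions. This is handled by noting that $g$ is bounded with compact support (hence in every $L^p$), so $I_1 g$ decays like $|x|^{-(n-1)}$ at infinity, which is enough to interpret $|D|\,I_1 g$ rigorously via the Fourier multiplier $|\xi|\cdot|\xi|^{-1}=1$ and recover $g$.
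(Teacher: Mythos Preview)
Your proposal is correct and follows essentially the same route as the paper: linearize $K$ at $\varepsilon=0$ to obtain $-X'Xg$ (your $-\int_{\mathbb{S}^{n-1}}Xg(x,\theta)\,d\theta$), then invert $X'X$ by applying $c_n|D|$. The only cosmetic difference is that the paper packages Steps~2--3 as citations to the known identities $X'Xg(x)=2\int g(y)|x-y|^{-(n-1)}\,dy$ and $g=c_n|D|X'Xg$ (Lemma~\ref{lemma:normal} and Proposition~\ref{prop:formula}), whereas you reproduce these computations inline via the Riesz potential.
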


Theorem~\ref{thm:formula} states that a function $g$ can be reconstructed through this formula based on the linearized data. It also immediately implies the stability estimate for $g$, see Corollary~\ref{cor:stability}.
The proof of Corollary~\ref{cor:stability} follows directly from Proposition~\ref{prop:stability} and the inversion formula of $K$.

\begin{corollary}[Stability estimate]\label{cor:stability}
	Let $\Omega$ be an open bounded domain in $\mathbb{R}^n$, $n>2$ with smooth boundary and let $\Omega_1$ be a larger open and bounded domain satisfying $\overline{\Omega}\subset \Omega_1$.
	There exists a constant $C>0$ depending on $n,\Omega,\Omega_1$ so that 
	\begin{align}\label{EST:g}
		C^{-1}\|g\|_{L^2(\R^n)}\leq  \|\p_\varepsilon|_{\varepsilon=0} K[\varepsilon g]\|_{H^{1}(\Omega_1)}   
	\end{align} 
	 for $g\in \mathcal{M}_0$.  
\end{corollary}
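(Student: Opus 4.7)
My plan is to combine Theorem~\ref{thm:formula} with Proposition~\ref{prop:stability}, which is precisely how the excerpt announces the argument. Write $u := \p_\varepsilon|_{\varepsilon=0}K[\varepsilon g]$. The inversion formula reads $g = -c_n|D|u$ as functions on $\R^n$, so by Plancherel
$$\|g\|_{L^2(\R^n)} = c_n\,\||D|u\|_{L^2(\R^n)}.$$
Since $g\in\mathcal{M}_0$ is supported in $\overline\Omega\subset\Omega_1$, the function $|D|u$ has compact support in $\overline\Omega$, and hence $\||D|u\|_{L^2(\R^n)} = \||D|u\|_{L^2(\Omega)}$. The corollary is thereby reduced to a local estimate of the form
$$\||D|u\|_{L^2(\Omega)} \leq C\,\|u\|_{H^1(\Omega_1)},$$
which is exactly the content of (or is an immediate consequence of) Proposition~\ref{prop:stability}. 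Chaining these two inequalities yields the claim.

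The substantive difficulty, absorbed into that proposition, is the localization step itself. Because $|D|=(-\Delta)^{1/2}$ is a nonlocal pseudo-differential operator of order one, the mere fact that $|D|u$ is compactly supported does not allow one to bound its $L^2$ norm by a purely local Sobolev norm of $u$. A natural route is to pick a cutoff $\chi\in C_c^\infty(\Omega_1)$ with $\chi\equiv 1$ on a neighborhood of $\overline\Omega$ and decompose
$$|D|u = |D|(\chi u) + |D|\bigl((1-\chi)u\bigr).$$
The first term is controlled on $\Omega$ by $\|\chi u\|_{H^1(\R^n)}\leq C\|u\|_{H^1(\Omega_1)}$ via the mapping $|D|\colon H^1\to L^2$. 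The second term is smoothing on $\Omega$, since $(1-\chi)u$ vanishes near $\overline\Omega$ and the Schwartz kernel of $|D|$ is smooth off the diagonal; estimating it requires tail control on $u$.

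It is at this tail-control step that the hypothesis $n>2$ plays a role: a direct computation shows $u(x) = -2\int_{\R^n} g(y)\,|y-x|^{-(n-1)}\,dy$ is, up to a constant, the Riesz potential $I_1 g$, and its global $L^2$ integrability (which in turn controls the far-field contribution of $|D|((1-\chi)u)$ on $\Omega$) hinges exactly on $n>2$. Once this is in hand, one can close the chain of inequalities and recover the stability bound $\|g\|_{L^2(\R^n)}\leq C\|u\|_{H^1(\Omega_1)}$ as asserted.
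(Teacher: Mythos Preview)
Your argument is correct and matches the paper's stated route (linearization plus Proposition~\ref{prop:stability}), but it is more circuitous than necessary. The paper's proof is one line: equation~\eqref{first linearization} in the proof of Theorem~\ref{thm:formula} already gives the identity $u=\p_\varepsilon|_{\varepsilon=0}K[\varepsilon g]=-X'Xg$, and Proposition~\ref{prop:stability} with $s=0$ and $f=g$ then yields $C^{-1}\|g\|_{L^2(\R^n)}\leq\|X'Xg\|_{H^1(\Omega_1)}=\|u\|_{H^1(\Omega_1)}$ immediately. There is no need to pass through $\||D|u\|_{L^2(\Omega)}$ or to worry about localizing $|D|$; once you recognize $u=-X'Xg$, Proposition~\ref{prop:stability} applies to $g$ as a black box. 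Your second and third paragraphs are in effect sketching an alternative proof of Proposition~\ref{prop:stability} itself via a cutoff decomposition of $|D|$ and tail control on $u$; that is a legitimate strategy, but it is different from the paper's own proof of that proposition, which instead removes the far-field contribution $\|X'Xf\|_{H^{s+1}(\R^n\setminus\Omega_1)}$ by noting that $X'X:H^s_0(\Omega)\to H^{s+1}(\R^n\setminus\Omega_1)$ has smooth kernel (hence is compact) and then invoking injectivity. For the corollary, none of that machinery needs to be revisited.
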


Moreover, we also study the following stability estimates.
\begin{theorem}\label{thm:stability} Let $\Omega\subset\R^n$, $n>2$ be an open and bounded domain and let $\Omega_1$ be a larger open and bounded domain so that $\overline{\Omega}\subset\Omega_1$. 
Then for any $f_1,f_2\in \mathcal{M}_1$ with
$\|f_j\|_{C^1(\R^n)}\leq M$, $j=1,2$, for some fixed positive constant $M$,
we have stability estimates
\begin{align*}
   \|Kf_1- Kf_2\|_{H^1(\Omega_1)} \leq C(1+ e^{CM}+Me^{CM})\|f_1-f_2\|_{L^2(\R^n)} +C(e^{CM}-1) \| \nabla (f_1-f_2) \|_{L^2(\R^n)},
\end{align*} 
where the positive constant $C$ is independent of $M$. 

Moreover, assume that $f_1, f_2$ satisfy 
\begin{align}\label{inverse poincare}
\|f_1-f_2\|_{L^2(\R^n)} \geq c \|\nabla (f_1-f_2)\|_{L^2(\R^n)}
\end{align}
for a fixed constant $c>0$, if $M$ is sufficiently small, then we have
\begin{align}\label{EST:KOmega1}
\widetilde{C}\| f_1-f_2\|_{L^2(\R^n)}\leq \|Kf_1-Kf_2\|_{H^1(\Omega_1)},
\end{align}
where the positive constant $\widetilde{C}$ depends on $c,M$. 
\end{theorem}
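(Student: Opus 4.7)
Set $g := f_1 - f_2$ and denote the back-projection by $Rh(x) := \int_{\mathbb{S}^{n-1}} Xh(x,\theta)\,d\theta$. The argument rests on two facts: first, the linearization identity $\p_\varepsilon|_{\varepsilon=0} K[\varepsilon h] = -Rh$ combined with Corollary~\ref{cor:stability}, which gives $\|Rh\|_{H^1(\Omega_1)} \geq C^{-1}\|h\|_{L^2(\R^n)}$; second, the smoothing of $R$: since $R$ is proportional to $(-\Delta)^{-1/2}$, the operator $\nabla R$ is a constant multiple of the Riesz transform and hence bounded on $L^2(\R^n)$. Throughout I will also use the elementary bound $\|Xh(\cdot,\theta)\|_{L^2(\Omega_1)}\leq C\|h\|_{L^2(\R^n)}$ (Cauchy--Schwarz along the line, then Fubini) and the pointwise estimate $|Xh(x,\theta)|\leq C\|h\|_{L^\infty}$ for $h$ supported in $\overline{\Omega}$.

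\emph{Upper bound.} The mean-value inequality $|e^{-Xf_1}-e^{-Xf_2}|\leq e^{CM}|Xg|$ yields immediately $\|Kf_1-Kf_2\|_{L^2(\Omega_1)} \leq C e^{CM}\|g\|_{L^2}$. For the gradient, I differentiate under the integral to get $\nabla Kf = -\int_{\mathbb{S}^{n-1}} e^{-Xf}\,X(\nabla f)\,d\theta$ and decompose the integrand difference as
\begin{align*}
e^{-Xf_1}X(\nabla f_1) - e^{-Xf_2}X(\nabla f_2)
= X(\nabla g) + (e^{-Xf_1}-1)X(\nabla g) + (e^{-Xf_1}-e^{-Xf_2})X(\nabla f_2).
\end{align*}
After integration over $\mathbb{S}^{n-1}$ the first piece becomes $\nabla Rg$, which is bounded in $L^2$ by $C\|g\|_{L^2}$ thanks to the smoothing of $R$; this is the crucial step, because a naive estimate would put $\|\nabla g\|_{L^2}$ here. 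The second piece uses $|e^{-Xf_1}-1|\leq e^{CM}-1$ and produces exactly the $C(e^{CM}-1)\|\nabla g\|_{L^2}$ contribution. The third uses $|X(\nabla f_2)|\leq CM$ together with the mean-value estimate, producing $CMe^{CM}\|g\|_{L^2}$. Summing these with the $L^2$ estimate gives the stated upper bound.

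\emph{Lower bound.} The fundamental theorem of calculus applied to $t\mapsto K[f_2+tg]$ gives
\begin{align*}
Kf_1 - Kf_2 &= -Rg - \int_{\mathbb{S}^{n-1}} \Phi(x,\theta)\, Xg(x,\theta)\,d\theta, \\
\Phi(x,\theta) &:= \int_0^1 \bigl(e^{-X(f_2+tg)(x,\theta)} - 1\bigr)\,dt,
\end{align*}
which isolates the linearization $-Rg = \p_\varepsilon|_{\varepsilon=0} K[\varepsilon g]$ from a purely nonlinear remainder. Since $|\Phi|\leq e^{CM}-1$ and $|\nabla_x\Phi|\leq CMe^{CM}$, the same kind of estimates as in the upper bound (now applied to $\Phi\cdot Xg$) show that the remainder is bounded in $H^1(\Omega_1)$ by $C(e^{CM}-1+Me^{CM})\|g\|_{L^2} + C(e^{CM}-1)\|\nabla g\|_{L^2}$. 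Invoking the inverse-Poincar\'e hypothesis~\eqref{inverse poincare} to absorb $\|\nabla g\|_{L^2}$ into $\|g\|_{L^2}$, and bounding $\|Rg\|_{H^1(\Omega_1)}\geq C^{-1}\|g\|_{L^2}$ via Corollary~\ref{cor:stability}, the reverse triangle inequality gives
\begin{align*}
\|Kf_1-Kf_2\|_{H^1(\Omega_1)} \geq \bigl[C^{-1} - C\bigl(e^{CM}-1 + Me^{CM} + c^{-1}(e^{CM}-1)\bigr)\bigr]\|g\|_{L^2}.
\end{align*}
Choosing $M$ small enough renders the bracket positive and delivers~\eqref{EST:KOmega1}.

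\emph{Main obstacle.} The decisive point in both halves is the gradient decomposition for $\nabla(Kf_1-Kf_2)$. A crude estimate $|e^{-Xf}|\leq e^{CM}$ would force every term to carry $\|\nabla g\|_{L^2}$ with an $e^{CM}$ coefficient rather than the $e^{CM}-1$ appearing in the statement, and this would be fatal for the lower bound since the remainder could no longer be made smaller than $C^{-1}\|g\|_{L^2}$ even for $M\to 0$. Isolating the $\nabla Rg$ contribution and exploiting the smoothing $R\colon L^2\to H^1_{\mathrm{loc}}(\R^n)$ is precisely what makes the $\|\nabla g\|_{L^2}$ coefficient vanish at $M=0$, which in turn is what allows the small-$M$ regime to close the lower bound.
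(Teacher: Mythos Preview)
Your proof is correct and follows essentially the same strategy as the paper: isolate the linearization $-X'Xg$ (your $-Rg$) from a nonlinear remainder, bound the remainder so that its $\|\nabla g\|_{L^2}$ coefficient vanishes at $M=0$, and invoke the normal-operator stability $\|X'Xg\|_{H^1(\Omega_1)}\gtrsim\|g\|_{L^2}$ (the paper uses Proposition~\ref{prop:stability} directly rather than Corollary~\ref{cor:stability}, but these are equivalent here).

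The only real difference is in how the remainder is represented. The paper expands $e^{-Xf}$ as a Taylor series and estimates $\sum_{m\geq 2}\frac{(-1)^m}{m!}\bigl((Xf_1)^m-(Xf_2)^m\bigr)$ term by term, which becomes somewhat laborious for the gradient (one has to open $(Xf_1)^{m-1}\nabla Xf_1-(Xf_2)^{m-1}\nabla Xf_2$ for every $m$). You instead use the integral remainder/mean-value form and a single algebraic decomposition of $e^{-Xf_1}X(\nabla f_1)-e^{-Xf_2}X(\nabla f_2)$, which packages the same estimates more cleanly and makes transparent exactly why the $\|\nabla g\|_{L^2}$ contribution carries $e^{CM}-1$ rather than $e^{CM}$. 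Both routes rely on the same key input, namely that $\nabla X'X$ is $L^2$-bounded (equivalently, $X'X$ is a multiple of $(-\Delta)^{-1/2}$), so your ``main obstacle'' paragraph identifies precisely the mechanism that drives the paper's argument as well.
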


In Theorem~\ref{thm:stability}, the estimate \eqref{EST:KOmega1} implies that the data $Kf$ in $\Omega_1$ is sufficient to give local stability estimate. Under the assumption \eqref{inverse poincare}, the left hand side of \eqref{EST:KOmega1} can be replaced by $\|f_1-f_2\|_{H^1(\mathbb R^n)}$ with another constant $\widetilde C$.

We apply the linearization scheme to investigate this nonlinear inverse problem, namely, reconstructing $f$ from the measurement $Kf$. Indeed, to study nonlinear inverse problems, it is classical to utilize the linearization scheme and then reduce it to the problem of their linearization, where the existing results, such as injectivity, are utilized to identify the unknown property \cite{Isakov93}. We would also like to note that a general result is proved in \cite{stefanov2009linearizing} when linearizing nonlinear inverse problems. It gives H\"older type estimates for the nonlinear problem under some conditions.
In Section~\ref{Sec:IP}, we linearize the transform $K$ around zero function. This is motivated by the following observation. Since the first nonconstant term of Taylor's expansion of $Kf$ is the normal operator of the X-ray transform $X$, linearizing $Kf$ then reveals this term while the remaining higher order terms vanish. Additionally, thanks to the previously established results for the X-ray transform, we can derive a local reconstruction formula of $K$ and also stability estimates for small enough $f$. For a general function $f$ (not necessary small), however, it would be more challenging to stably recover $f$ since the higher order terms dominate the behavior of $f$. We do not consider this issue here.

In this paper, we also study the single pixel X-ray transform $K$ in the Riemannian case. We establish the uniqueness of $K$ on compact manifolds with boundary, on which the geodesic X-ray transform $X$ is injective. Our proof is a generalization of the argument for the Euclidean case \cite{KDMV16}, by applying the Santalo's formula.

Besides the above theoretical results, we conduct numerical reconstructions for the single pixel X-ray transform $K$ by an optimization method. These experiments provide numerical evidence for our stability estimates of $K$. In particular, if the magnitude of $f$ is small, the reconstruction of $f$ from $Kf$ works quite well, even in the presence of mild noises. While in the case of large $f$, the optimization approach could fail, which suggests that the estimate \eqref{EST:KOmega1} might not hold when $M$ is large.

\section{Inverse problems}\label{Sec:IP}
\subsection{Preliminary results}
We introduce several known results for the X-ray transform. For a function $f\in L^1(\R^n)$, the X-ray transform  
$$
    Xf(x,\theta) = \int_\R f(x+s\theta)\,ds 
$$
is well-defined and is constant along lines in direction $\theta$, that is, $Xf(x,\theta) = Xf(x+t\theta,\theta)$ and, moreover, $Xf(x,\theta)=Xf(x,-\theta)$ for $t\in\R$, $x\in\R^n$ and $\theta\in\mathbb{S}^{n-1}$.
Let $\Sigma:=\{(z,\theta)\in \mathbb R^n\times\mathbb S^{n-1}: z\in \theta^{\perp}\}$, where $\theta^\perp:=\{z\in\mathbb R^n: z\perp \theta\}$ is the orthogonal complement of $\theta$, be the parameter set of straight lines, then $$X: C^\infty_c(\mathbb R^n)\to C^\infty_c(\Sigma)$$
is a continuous map. 
In particular, $Xf$ is compactly supported in $\Sigma$ if $f$ is compactly supported. We denote the adjoint of $X$, under the $L^2$ inner product, by $X'$ and the normal operator by $X'X$. Then $X': C^\infty_c(\Sigma)\to C^\infty(\mathbb R^n)$ has the expression
$$X' \psi (x) :=\int_{\mathbb S^{n-1}} \psi(x-(x\cdot \theta) \theta, \theta)\, d\theta.$$

Then we have the following results, see \cite{Natterer2001book, StefanovUhlmannBook} for detailed discussions and proofs. 
\begin{lemma}\label{lemma:normal} 
For $f\in\mathcal{S}(\R^n)$ (Schwartz space),
	\begin{align*} 
	X'Xf(x)  
 	 = \int_{\mathbb{S}^{n-1}} \int_\R f(x+s\theta)ds \,d\theta 
	 =2\int_{\R^n} {f(y)\over |x-y|^{n-1}} \, dy.
	\end{align*} 
\end{lemma}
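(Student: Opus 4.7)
The plan is to verify the two equalities in turn, both by direct unpacking of the definitions.

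For the first equality, I would start from the formula for the adjoint $X'\psi(x) = \int_{\mathbb{S}^{n-1}} \psi(x-(x\cdot\theta)\theta,\theta)\,d\theta$ stated just before the lemma, applied to $\psi = Xf$. The key observation is that $Xf(\cdot,\theta)$ is constant along every line in direction $\theta$, and in particular $x - (x\cdot\theta)\theta$ and $x$ lie on the same line, so $Xf(x-(x\cdot\theta)\theta,\theta) = Xf(x,\theta)$. Substituting the definition \eqref{DEF:Xray} of $X$ gives the first equality immediately.

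For the second equality, the strategy is to change the inner integral on the left from the $(s,\theta) \in \R \times \mathbb{S}^{n-1}$ parametrization to Cartesian coordinates on $\R^n$ via polar coordinates centered at $x$. Writing $y = x + r\theta$ with $r \geq 0$ and $\theta \in \mathbb{S}^{n-1}$ gives $dy = r^{n-1}\,dr\,d\theta$, so
\begin{equation*}
\int_{\mathbb{S}^{n-1}}\int_0^\infty f(x+r\theta)\,dr\,d\theta \;=\; \int_{\R^n} \frac{f(y)}{|x-y|^{n-1}}\,dy,
\end{equation*}
since $|x-y|^{n-1}$ cancels exactly the polar Jacobian. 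To close the gap between $\int_\R \cdots ds$ and $\int_0^\infty \cdots dr$, I would split $\int_\R = \int_0^\infty + \int_{-\infty}^0$ and in the negative piece perform the change of variables $s \mapsto -s$, $\theta \mapsto -\theta$. The integrand $f(x+s\theta)$ is invariant under this simultaneous sign flip, while the measure $d\theta$ on $\mathbb{S}^{n-1}$ is antipodally invariant, so the two halves of the $s$-integral coincide and produce the overall factor $2$.

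There is essentially no obstacle here: the Schwartz hypothesis ensures absolute integrability so that Fubini applies freely, and the only bookkeeping to watch is the antipodal symmetry that accounts for the factor $2$. The argument is independent of dimension for $n \geq 2$, which is relevant because the lemma is later invoked in both the $n=2$ and $n>2$ settings.
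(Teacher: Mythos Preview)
Your argument is correct. The paper does not actually supply its own proof of this lemma; it simply cites \cite{Natterer2001book, StefanovUhlmannBook} for the details, and the computation you outline---using the constancy of $Xf(\cdot,\theta)$ along lines for the first equality and polar coordinates plus antipodal symmetry for the second---is exactly the standard derivation found in those references.
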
 

An inversion formula of the X-ray transform is stated below.
\begin{proposition}\label{prop:formula} 
	For $f\in\mathcal{S}(\R^n)$, 
	$$
	f= c_n|D|X'Xf, 
	$$
	where $c_n=(2\pi|\mathbb{S}^{n-2}|)^{-1}$, $D=(-\Delta)^{1/2}$ is a non-local pseudo-differential operator and $|\mathbb{S}^{n-2}|$ is the measure of the unit sphere $\mathbb{S}^{n-2}$.
\end{proposition}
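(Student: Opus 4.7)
The plan is to reduce the inversion formula to an explicit Fourier symbol computation. By Lemma~\ref{lemma:normal}, the normal operator has a convolution structure,
\[
X'Xf(x) = 2\,(\Phi * f)(x), \qquad \Phi(x) := |x|^{-(n-1)},
\]
so everything reduces to identifying the Fourier multiplier of $f\mapsto \Phi * f$, composing with $|D|$ (whose symbol is $|\xi|$), and verifying that the resulting constant matches $c_n^{-1} = 2\pi|\mathbb{S}^{n-2}|$.

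The key ingredient is the classical Riesz-potential identity: for $0<\alpha<n$, using the convention $\hat g(\xi) = \int g(x) e^{-i x\cdot\xi}\,dx$, one has the tempered-distribution equality
\[
\widehat{|x|^{-\alpha}}(\xi) = \frac{2^{n-\alpha}\pi^{n/2}\,\Gamma\bigl(\tfrac{n-\alpha}{2}\bigr)}{\Gamma(\alpha/2)}\,|\xi|^{-(n-\alpha)}.
\]
I would derive this (or quote it) via the Gaussian representation $|x|^{-\alpha} = \Gamma(\alpha/2)^{-1}\int_0^\infty t^{\alpha/2-1}e^{-t|x|^2}\,dt$, Fubini, the Gaussian Fourier transform, and the substitution $u = |\xi|^2/(4t)$. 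Specializing to $\alpha = n-1$ and using $\Gamma(1/2)=\sqrt{\pi}$ yields
\[
\hat{\Phi}(\xi) = \frac{2\pi^{(n+1)/2}}{\Gamma\bigl(\tfrac{n-1}{2}\bigr)}\,|\xi|^{-1}.
\]
Therefore, for $f\in\mathcal{S}(\mathbb{R}^n)$,
\[
\widehat{|D|\,X'Xf}(\xi) = |\xi|\cdot 2\hat{\Phi}(\xi)\hat f(\xi) = \frac{4\pi^{(n+1)/2}}{\Gamma\bigl(\tfrac{n-1}{2}\bigr)}\,\hat f(\xi).
\]

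To finish, I would match constants using the sphere-area formula $|\mathbb{S}^{n-2}| = 2\pi^{(n-1)/2}/\Gamma\bigl(\tfrac{n-1}{2}\bigr)$, which gives $2\pi|\mathbb{S}^{n-2}| = 4\pi^{(n+1)/2}/\Gamma\bigl(\tfrac{n-1}{2}\bigr) = c_n^{-1}$. Inverting the Fourier equality above then yields $f = c_n\,|D|\,X'Xf$. The only real nuisance is distributional: $\Phi$ is not in $L^1\cup L^2$, so the Fourier identities above must be read in $\mathcal{S}'(\mathbb{R}^n)$, and the multiplication $|\xi|\hat{\Phi}(\xi)\hat f(\xi)$ must be justified. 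For $f\in\mathcal{S}$ this is harmless because $\hat f$ decays faster than any polynomial, so $|\xi|^{-1}\hat f(\xi)$ is integrable and all pairings are absolutely convergent; this is the only step requiring any real care.
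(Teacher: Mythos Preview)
Your proof is correct. The paper does not actually supply a proof of this proposition; it simply cites \cite{Natterer2001book, StefanovUhlmannBook} for the details, so there is no in-paper argument to compare against. Your Fourier-multiplier computation via Lemma~\ref{lemma:normal} and the Riesz-potential identity is precisely the standard route to this inversion formula (and is essentially what one finds in those references), and your constant bookkeeping checks out.
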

This proposition implies that, up to a positive constant, $X'X$ is the Fourier multiplier $|\xi|^{-1}$.
Note that the inversion formula of Proposition~\ref{prop:formula} remains true for any distribution $f$ with compact support.

Throughout this paper, we use $C$ to denote positive constants, which may change from line to line.
 
Next proposition shows the stability of the inversion. 
\begin{proposition}\label{prop:stability} 
	Let $\Omega$ be an open bounded domain in $\mathbb{R}^n$, $n>2$ with smooth boundary and let $\Omega_1$ be a larger open and bounded domain satisfying $\overline{\Omega}\subset \Omega_1$. For any nonnegative integer $s$, there is a constant $C>0$ so that 
	\begin{align}\label{normal stability} 
    C^{-1}\|f\|_{H^s(\R^n)}\leq \|X'Xf\|_{H^{s+1}(\Omega_1)}\leq C\|f\|_{H^s(\R^n)}
	\end{align} 
	for $f\in H^s(\R^n)$ supported in $\overline\Omega$.
\end{proposition}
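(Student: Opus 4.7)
My plan is to prove the two sides of \eqref{normal stability} separately. The central input is that Lemma~\ref{lemma:normal} together with Proposition~\ref{prop:formula} identifies $X'X$ with the Fourier multiplier $c_n^{-1}|\xi|^{-1}$, i.e.\ $\widehat{X'Xf}(\xi)=c_n^{-1}|\xi|^{-1}\hat f(\xi)$; so $X'X$ is an elliptic $\Psi$DO of order $-1$ and the only delicate frequencies are near the origin.

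The upper bound I would obtain directly in Fourier. From
\begin{align*}
\|X'Xf\|_{H^{s+1}(\R^n)}^2 = c_n^{-2}\int_{\R^n}(1+|\xi|^2)^{s+1}|\xi|^{-2}|\hat f(\xi)|^2\,d\xi,
\end{align*}
the integrand splits at $|\xi|=1$: on $|\xi|\geq 1$ the weight is comparable to $(1+|\xi|^2)^s$, giving a contribution bounded by $\|f\|_{H^s(\R^n)}^2$. On $|\xi|<1$ the compact support of $f$ in $\overline\Omega$ gives $|\hat f(\xi)|\leq|\Omega|^{1/2}\|f\|_{L^2}$, and the remaining $\int_{|\xi|<1}|\xi|^{-2}\,d\xi$ converges precisely because $n>2$; restricting to $\Omega_1$ then yields the right-hand inequality in \eqref{normal stability}.

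For the lower bound, the essential difficulty is that the inversion identity $f=c_n|D|X'Xf$ is a global relation on $\R^n$, whereas we only control $X'Xf$ on $\Omega_1$. I would argue by contradiction and compactness: assume there exists a sequence $\{f_k\}\subset H^s(\R^n)$ supported in $\overline\Omega$ with $\|f_k\|_{H^s(\R^n)}=1$ and $\|X'Xf_k\|_{H^{s+1}(\Omega_1)}\to 0$. Rellich compactness extracts a subsequence converging in $L^2(\R^n)$ to some $f$ supported in $\overline\Omega$; the $s=0$ case of the upper bound (already proved) shows $X'X$ is bounded from $L^2(\overline\Omega)$ into $H^1(\Omega_1)$, so passing to the limit gives $X'Xf=0$ on $\Omega_1$. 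By the explicit kernel $2|x-y|^{-(n-1)}$ of Lemma~\ref{lemma:normal}, $X'Xf$ is real-analytic on $\R^n\setminus\overline\Omega$, which is connected for a bounded $\Omega$ in $n\geq 2$; since $\Omega_1\setminus\overline\Omega$ is a nonempty open subset on which $X'Xf$ vanishes, analytic continuation forces $X'Xf\equiv 0$ on $\R^n$, and then Proposition~\ref{prop:formula} yields $f\equiv 0$.

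To close the contradiction, the same Fourier identity $\hat f=c_n^{-1}|\xi|\,\widehat{X'Xf}$ run in reverse gives, for any $g$ supported in $\overline\Omega$,
\begin{align*}
\|g\|_{H^s(\R^n)}^2 \leq C\bigl(\|X'Xg\|_{H^{s+1}(\R^n)}^2 + \|g\|_{L^2(\R^n)}^2\bigr),
\end{align*}
where the $L^2$-correction again comes from $|\xi|<1$ and relies on $n>2$. Combining with the spatial tail estimate $\|X'Xg\|_{H^{s+1}(\R^n\setminus\Omega_1)}\leq C\|g\|_{L^1(\Omega)}$, which follows from the pointwise bound $|\partial_x^\alpha X'Xg(x)|\lesssim|x|^{-(n-1+|\alpha|)}\|g\|_{L^1}$ for $|x|$ large (whose $L^2$-integrability at infinity once again requires $n>2$), one arrives at $\|g\|_{H^s(\R^n)}^2\leq C\bigl(\|X'Xg\|_{H^{s+1}(\Omega_1)}^2+\|g\|_{L^2(\R^n)}^2\bigr)$. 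Applied to $f_k$ along the chosen subsequence, both terms on the right vanish in the limit—the first by hypothesis, the second because $f_k\to 0$ in $L^2$—contradicting $\|f_k\|_{H^s}=1$. I expect the main obstacle to be tracking the two places where the low dimension would spoil things: the low-frequency integrability of $|\xi|^{-2}$ and the spatial decay of the kernel $|x|^{-(n-1)}$ at infinity, both governed precisely by the hypothesis $n>2$.
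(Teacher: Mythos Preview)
Your approach is essentially the paper's: use the Fourier multiplier representation $\widehat{X'Xf}=c_n^{-1}|\xi|^{-1}\hat f$ together with the split at $|\xi|=1$ for the upper bound, and for the lower bound combine the global estimate $\|f\|_{H^s}\leq C\|X'Xf\|_{H^{s+1}(\R^n)}$ with a compactness argument to drop the contribution outside $\Omega_1$. The paper compresses that last step into a citation (smooth kernel $\Rightarrow$ $X'X:H^s_0(\Omega)\to H^{s+1}(\R^n\setminus\Omega_1)$ is compact, plus injectivity of $X'X$ restricted to $\Omega_1$, then a standard functional-analytic lemma), whereas you spell out both the contradiction/Rellich argument and the explicit tail decay $|\partial^\alpha X'Xg(x)|\lesssim |x|^{-(n-1+|\alpha|)}\|g\|_{L^1}$, which is a useful unpacking of exactly the same mechanism.

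Two minor corrections. First, the $L^2$-correction in your global lower bound is unnecessary: since $(1+|\xi|^2)^s|\xi|^2\leq (1+|\xi|^2)^{s+1}$, the Fourier identity gives $\|g\|_{H^s}\leq c_n\|X'Xg\|_{H^{s+1}(\R^n)}$ directly, with no low-frequency issue in this direction (the $n>2$ constraint enters only in the upper bound and in the spatial tail). Second, your claim that $\R^n\setminus\overline\Omega$ is connected is false in general (take $\Omega$ an annulus). The analytic continuation step can still be salvaged because $\Omega_1$, being an open neighborhood of $\partial\Omega$, meets every connected component of $\R^n\setminus\overline\Omega$; alternatively, and more simply, once $X'Xf|_{\Omega_1}=0$ and $\operatorname{supp} f\subset\overline\Omega\subset\Omega_1$ you can write $0=\langle X'Xf,f\rangle_{L^2}=\|Xf\|_{L^2}^2$ and conclude $f=0$ from the injectivity of $X$, avoiding analyticity altogether.
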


Similar stability estimates in the Riemannian setting have been established in \cite{SU2004}. Notice that the estimate \eqref{normal stability} is associated with the normal operator $X'X$, which has strong connections to the operator $K$. There also exist stability results regarding the transform $X$ itself, see e.g. \cite[Section II.5]{Natterer2001book}.
Since Proposition \ref{prop:stability} is crucial in the derivation of our main results later, we provide the proof here following \cite{StefanovUhlmannBook}. 

\begin{proof}[Proof of Proposition~\ref{prop:stability}]
The second inequality of \eqref{normal stability} follows from the fact that $X'X$ is the Fourier multiplier $c_n^{-1}|\xi|^{-1}$ by Proposition~\ref{prop:formula}. To this end, we first estimate
\begin{align*}
    \|X'Xf\|^2_{H^{s+1}(\mathbb R^n)} & =\|(1+|\xi|^2)^{(s+1)/2}\mathcal F(X'Xf)(\xi)\|^2_{L^2(\mathbb R^n)}=\|c_n^{-1}(1+|\xi|^2)^{(s+1)/2}|\xi|^{-1} \widehat f(\xi)\|^2_{L^2(\mathbb R^n)}\\
    & \leq C \LC\|(1+|\xi|^2)^{s/2}\widehat f(\xi)\|^2_{L^2(|\xi|> 1)}+\||\xi|^{-1}\widehat f(\xi)\|^2_{L^2(|\xi|\leq 1)}\RC.
\end{align*}
Here we denote the Fourier transform of a function $f$ by $\hat{f}$ or $\mathcal{F}(f)$.
It remains to estimate the second term $\||\xi|^{-1}\widehat f(\xi)\|^2_{L^2(|\xi|\leq 1)}$. We have
$$
    |\widehat{f}(\xi)|=\LV \int e^{-ix\cdot\xi} f(x)\,dx\RV\leq \|f\|_{H^s(\R^n)} \|\phi_\xi \|_{H^{-s}(\R^n)},
$$
where $\phi_\xi(x):= e^{-ix\cdot\xi}\chi_\Omega(x)$ with $\chi_\Omega\in C^\infty_0(\R^n)$ equals $1$ in a neighborhood of  $\overline\Omega$. Then
\begin{align*}
    \||\xi|^{-1}\widehat f(\xi)\|^2_{L^2(|\xi|\leq 1)} &= \int_{|\xi|\leq 1} |\xi|^{-2} |\widehat{f}(\xi)|^2 \,d\xi\\
    & \leq \LC\int_{|\xi|\leq 1} |\xi|^{-2}\,d\xi\RC \|f\|^2_{H^s(\R^n)} \max_{|\xi|\leq 1}\|\phi_\xi \|^2_{H^{-s}(\R^n)}.
\end{align*}
Note that $\|\phi_\xi \|_{H^{-s}(\R^n)}\leq C$, where the constant $C>0$ depends on $n,s$ for each $|\xi|\leq 1$. Moreover, $\int_{|\xi|\leq 1} |\xi|^{-2}\,d\xi<\infty$ for $n\geq 3$. Hence, $\||\xi|^{-1}\widehat f(\xi)\|^2_{L^2(|\xi|\leq 1)}\leq C \|f\|^2_{H^s(\R^n)}$ also holds. This proves the second inequality of \eqref{normal stability} by observing that
$$\|X'Xf\|_{H^{s+1}(\Omega_1)}\leq \|X'Xf\|_{H^{s+1}(\mathbb R^n)}\leq C\|f\|_{H^s(\mathbb R^n)}.$$

To show the first inequality of \eqref{normal stability}, we begin by applying Proposition~\ref{prop:formula} to get that
\begin{align}\label{first inequality}
    \|f\|^2_{H^s(\mathbb R^n)}\leq C\|X'X f\|^2_{H^{s+1}(\mathbb R^n)}=C\bigg(\|X'Xf\|^2_{H^{s+1}(\Omega_1)}+\|X'Xf\|^2_{H^{s+1}(\mathbb R^n\setminus \Omega_1)}\bigg).
\end{align}
The operator $X'X: H^s_0(\Omega)\to H^{s+1}(\mathbb R^n\setminus \Omega_1)$ (note that $f$ is supported in $\overline\Omega$) has a smooth kernel, so it is compact. Together with the fact that $X'X: H^s_0(\Omega)\to H^{s+1}(\Omega_1)$ is injective, one can remove the term $\|X'Xf\|_{H^{s+1}(\mathbb R^n\setminus\Omega_1)}$ from the estimate \eqref{first inequality}, see \cite{StefanovUhlmannBook} and \cite[Proposition V 3.1]{Taylor1981}. This proves the first inequality of \eqref{normal stability}.
\end{proof}

To conclude this section, we establish the following mapping properties of the operator $K$.
 
\begin{proposition}
	Let $\Omega$ and $\Omega_1$ be two open bounded domains in $\mathbb R^n$ with $\Omega\subset \overline\Omega\subset \Omega_1$. Then there exists a positive constant $C$, depending on $n$, $\Omega$ and $\Omega_1$, such that
	$$\|Kf\|_{H^1(\Omega_1)}\leq Ce^{C\|f\|_{C^0(\R^n)}} (1+\|f\|_{H^1(\mathbb R^n)})$$
	for any $f\in C^1(\mathbb R^n)$ supported in $\overline\Omega$.
\end{proposition}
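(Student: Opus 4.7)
The plan is to bound $Kf$ and its gradient separately in $L^2(\Omega_1)$. For the zeroth-order part, the key observation is that for any $x\in\R^n$ and $\theta\in\mathbb{S}^{n-1}$, the line $\{x+s\theta:s\in\R\}$ intersects $\overline\Omega$ in a segment of length at most $d:=\mathrm{diam}(\overline\Omega)$, so
$$|Xf(x,\theta)|\leq d\,\|f\|_{C^0(\R^n)},\qquad \hbox{hence}\qquad e^{-Xf(x,\theta)}\leq e^{d\|f\|_{C^0(\R^n)}}$$
pointwise. This immediately gives $\|Kf\|_{L^2(\Omega_1)}\leq |\Omega_1|^{1/2}|\mathbb{S}^{n-1}|\,e^{d\|f\|_{C^0(\R^n)}}$.

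For the first-order part, I would differentiate under the integral sign, using $\partial_{x_j}Xf(x,\theta)=X(\partial_j f)(x,\theta)$ for $f\in C^1$ compactly supported, to obtain
$$\partial_{x_j}Kf(x)=-\int_{\mathbb{S}^{n-1}} e^{-Xf(x,\theta)}X(\partial_j f)(x,\theta)\,d\theta.$$
Applying Cauchy--Schwarz on the $\theta$-integral together with the pointwise exponential bound yields
$$|\partial_{x_j}Kf(x)|^2\leq |\mathbb{S}^{n-1}|\,e^{2d\|f\|_{C^0(\R^n)}}\int_{\mathbb{S}^{n-1}}|X(\partial_j f)(x,\theta)|^2\,d\theta.$$

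The core of the argument is then the localized $L^2$-boundedness
$$\int_{\Omega_1}\int_{\mathbb{S}^{n-1}}|X(\partial_j f)(x,\theta)|^2\,d\theta\,dx\leq C\,\|\partial_j f\|_{L^2(\R^n)}^2,$$
which I would prove as follows: (i) by Cauchy--Schwarz on the inner line integral, using that $\{s:x+s\theta\in\overline\Omega\}$ has length at most $d$, I bound $|X(\partial_j f)(x,\theta)|^2\leq d\int_\R |\partial_j f(x+s\theta)|^2\chi_{\overline\Omega}(x+s\theta)\,ds$; (ii) I apply Fubini to swap the $x$- and $s$-integrals and change variables $y=x+s\theta$ (with Jacobian one); (iii) for fixed $y\in\overline\Omega$, the measure of $\{s\in\R:y-s\theta\in\Omega_1\}$ is bounded by $\mathrm{diam}(\Omega_1)$, so the iterated integral reduces to a constant times $\|\partial_j f\|_{L^2(\R^n)}^2$ uniformly in $\theta$.

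Putting these together yields $\|\nabla Kf\|_{L^2(\Omega_1)}\leq C\,e^{C\|f\|_{C^0(\R^n)}}\|\nabla f\|_{L^2(\R^n)}$, which combined with the $L^2$ bound above gives the stated $H^1$ estimate. The main obstacle is precisely the localized $L^2$-boundedness step: $X$ does not map $L^2(\R^n)\to L^2(\R^n\times\mathbb{S}^{n-1})$ without localization, so both the compact support of $f$ in $\overline\Omega$ and the restriction of the base point to the bounded set $\Omega_1$ must be used simultaneously in the Fubini/change-of-variables bookkeeping.
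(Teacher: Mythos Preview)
Your proof is correct. The $L^2$ bound on $Kf$ is identical to the paper's. For the gradient term, however, the paper takes a different route: after pulling the exponential out, it simply bounds
\[
|\nabla Kf(x)|\leq e^{CM}\int_{\mathbb{S}^{n-1}}\int_{\R}|\nabla f(x+s\theta)|\,ds\,d\theta = e^{CM}\,(X'X|\nabla f|)(x),
\]
recognizing the remaining integral as the normal operator $X'X$ applied to $|\nabla f|$, and then invokes the upper bound in Proposition~\ref{prop:stability} (with $s=0$) to get $\|X'X|\nabla f|\|_{L^2(\Omega_1)}\leq C\|\nabla f\|_{L^2(\R^n)}$. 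Your approach instead applies Cauchy--Schwarz on the sphere and then proves the localized $L^2$-bound for $X$ by a hands-on Fubini/change-of-variables argument. This is more elementary and entirely self-contained: it avoids the Fourier-multiplier machinery behind Proposition~\ref{prop:stability} (whose proof as written uses $n\geq 3$), so your argument works verbatim in all dimensions $n\geq 2$. The paper's route is shorter because the $X'X$ estimate has already been established, and it highlights the structural link to the normal operator that is used repeatedly elsewhere.
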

This implies that $Kf$ is well-defined in $H^1$ norm.
Notice that $K[0]=|\mathbb{S}^{n-1}|$ and, therefore, $Kf\neq 0$ even if $f\equiv 0$.
Due to this fact, it is worth emphasizing that $Kf$ in general is not in $L^2(\R^n)$.   
\begin{proof}
	Since $f\in C^1(\R^n)$ with support in $\overline\Omega$, one can denote $M:=\|f\|_{C^0(\R^n)}$ for some finite constant $M\geq 0$, which implies that $|e^{-Xf}|$ is bounded by $e^{CM}$. Then
	\begin{align*}
		\|Kf\|^2_{L^2(\Omega_1)} & = \int_{\Omega_1}\LV\int_{\mathbb S^{n-1}} e^{-Xf(x,\theta)}\, d\theta \RV^2\, dx\leq \int_{\Omega_1}\LV \int_{\mathbb S^{n-1}} e^{CM}\, d\theta\RV^2\, dx\leq Ce^{2CM},
	\end{align*}
	where the constant $C$ depends on $n,\Omega,\Omega_1$.
	Moreover, we have
	\begin{align*}
		\|\nabla Kf\|^2_{L^2(\Omega_1)} & =\int_{\Omega_1} \LV\nabla_x\int_{\mathbb S^{n-1}} e^{-Xf(x,\theta)}\, d\theta\RV^2\, dx=\int_{\Omega_1}\LV \int_{\mathbb S^{n-1}} -e^{-Xf} \LC\nabla_x\int f(x+s\theta)\,ds\RC d\theta\RV^2\, dx\\
		& \leq e^{2CM} \int_{\Omega_1} \LC \int_{\mathbb S^{n-1}}\int \LV\nabla_x f(x+s\theta)\RV \, ds d\theta \RC^2\, dx
		=e^{2CM}\int_{\Omega_1} (X' X |\nabla f|)^2\,dx\\
		& = e^{2CM}\| X'X (|\nabla f|) \|^2_{L^2(\Omega_1)}\leq Ce^{2CM} \|\nabla f\|^2_{L^2(\mathbb R^n)}\leq C e^{2CM}\|f\|^2_{H^1(\mathbb R^n)},
	\end{align*}    
	where we applied Proposition~\ref{prop:stability} with $s=0$ to derive the second last inequality. Combining the two estimates for $Kf$ together yields the result.
\end{proof}

\subsection{A reconstruction formula}\label{sec:formula} 
To study the inverse problem,  
we replace $e^{-Xf(x,\theta)}$ in $Kf$ by its Taylor expansion and then obtain
\begin{align}\label{Kf taylor}
	Kf(x)   
	&= \int_{\mathbb{S}^{n-1}} \LC 1- Xf(x,\theta) + Rf(x,\theta)  \RC \,d\theta \notag\\
	&= |\mathbb{S}^{n-1}|- X'Xf(x) + \int_{\mathbb{S}^{n-1}} Rf(x,\theta) \,d\theta,
\end{align} 
where the higher order terms are denoted by
\begin{align*}
R f(x,\theta) :=  \sum^\infty_{m=2} {(-1)^m\over m!} (Xf)^m(x,\theta)
\end{align*}
and then $\int_{\mathbb{S}^{n-1}}R f(x,\theta)\,d\theta$ is finite for $f\in \mathcal{M}_0$.

We linearize the transform $K$ around the zero function so that the problem is reduced to the inverse problem for the X-ray transform.
\begin{proof}[Proof of Theorem~\ref{thm:formula}]
	Now we take $f=\varepsilon g\in \mathcal{M}_0$ and let $\varepsilon>0$ be a sufficiently small real number. We differentiate $K[\varepsilon g]$ with respect to $\varepsilon$ at $\varepsilon=0$, denoted by $\p_\varepsilon|_{\varepsilon=0} K[\varepsilon g]$, and then obtain
	\begin{align}\label{first linearization}
		\p_\varepsilon|_{\varepsilon=0} K[\varepsilon g](x)&=\lim\limits_{\varepsilon\rightarrow 0} \varepsilon^{-1} (K[\varepsilon g](x) - K[0](x))  \notag\\
		&= - X'Xg(x) +  \lim\limits_{\varepsilon\rightarrow 0}  \int_{\mathbb{S}^{n-1}} \sum^\infty_{m=2} \varepsilon^{m-1} {(-1)^m\over m!}(Xg)^m(x,\theta)  \,d\theta \notag\\
		&= - X'Xg(x),
	\end{align} 
	where we used the fact that $X$ and $X'X$ are linear operators and also $K[0]=|\mathbb{S}^{n-1}|$.
	
	Applying non-local operator $c_n|D|$ to both sides of \eqref{first linearization}, the reconstruction formula of X-ray transform in Proposition~\ref{prop:formula} yields the following reconstruction formula of $K$:
	\begin{align}\label{first reconstruction formula}
	c_n|D| (\p_\varepsilon|_{\varepsilon=0} K[\varepsilon g]) = -c_n|D| X'Xg = -g.
	\end{align}
	Hence the proof of Theorem~\ref{thm:formula} is complete.
\end{proof}

\begin{remark}
By Proposition~\ref{prop:stability} with $s=0$, the reconstruction formula \eqref{first reconstruction formula} also leads to the stability estimate of $g\in \mathcal{M}_0$ in Corollary~\ref{cor:stability}. Since we take the limit $\varepsilon \rightarrow 0$ in the linearization process, this suggests that infinitely many measurements are needed to recover $g$.
\end{remark}

\subsection{Stability estimate}
We are ready to show that the reconstruction of $f$ from the data $Kf$ is stable under suitable assumptions.
\begin{proof}[Proof of Theorem~\ref{thm:stability}] 
Suppose that $\|f_j\|_{C^1(\R^n)}\leq M$ for some constant $M>0$. For any $x\in\Omega_1$, from \eqref{Kf taylor}, we have
\begin{align}\label{ID:f1f2}
    \hskip.5cm K f_1(x)-Kf_2(x) 
    = (X'Xf_2 - X'Xf_1 )(x) + X' (R f_1-R f_2 )(x).
\end{align} 	
By direct computations, the remainder term $X' (R f_1-R f_2 )(x)$ satisfies
\begin{align*}
	|X' (R f_1-R f_2 )(x)|&\leq \LC CM+ {(CM)^2\over 2!}+{(CM)^3\over 3!}+\cdots \RC(X'X|f_1 -f_2| )(x)\\
	&\leq (e^{CM}-1) (X'X|f_1 - f_2| )(x),
\end{align*}
which yields the following two estimates:
\begin{align*}
    |K f_1(x)-Kf_2(x)|\leq |X'X(f_1 - f_2)(x)|+ (e^{CM} -1) (X'X|f_1 - f_2| )(x)
\end{align*}
and
\begin{align*}
    |K f_1(x)-Kf_2(x)|\geq 	|X'X(f_1 - f_2)(x)|- (e^{CM}-1) (X'X|f_1 - f_2| )(x).
\end{align*}
Here $C$ is a positive constant depending on $\Omega$.	
Based on these, we can derive the $L^2$ norm estimate
\begin{equation}\label{EST:L2K}
\begin{split}
   &\hskip.5cm\|X'X(f_1 -f_2) \|_{L^2(\Omega_1)} -(e^{CM}-1)\|X'X|f_1-f_2|\|_{L^2(\Omega_1)}\\ 
   &  \leq \|Kf_1-Kf_2\|_{L^2(\Omega_1)} \leq e^{CM} \|X'X|f_1 - f_2| \|_{L^2(\Omega_1)}.
\end{split}
\end{equation}

Next we estimate $\|\nabla (Kf_1-Kf_2)\|_{L^2(\Omega_1)}$ by differentiating \eqref{ID:f1f2}:
\begin{align}\label{diff K}
	\nabla (K f_1-Kf_2)(x) 
	= \nabla (X'Xf_2 - X'Xf_1 )(x) + \nabla(X' R f_1- X'R f_2)(x).
\end{align}
It is sufficient to estimate the remainder term $\nabla X'Rf$. Note that for any $m\geq 2$,
\begin{align*}
    &\hskip.5cm   \frac{1}{m!}|\nabla (X'(Xf_1)^m-X'(Xf_2)^m)(x)|\\
    &=   \frac{1}{(m-1)!}|X'((Xf_1)^{m-1}\nabla_x Xf_1-(Xf_2)^{m-1}\nabla_x Xf_2)|\\
    &\leq   \frac{1}{(m-1)!}|X' (((Xf_1)^{m-1}-(Xf_2)^{m-1} )\nabla_x Xf_1)|+\frac{1}{(m-1)!}|X'((Xf_2)^{m-1}(\nabla_x Xf_1-\nabla_x Xf_2))|\\
   & \leq   \frac{1}{(m-2)!}(CM)^{m-1} X'X|f_1-f_2|+ \frac{1}{(m-1)!} (CM)^{m-1} X'X |\nabla (f_1-f_2)|.
\end{align*}
It follows that
\begin{align*}
|\nabla (X' R f_1- X'R f_2)(x)|  
&\leq \LC CM+ (CM)^2 +\frac{(CM)^3}{2!}+\cdots \RC X'X|f_1 - f_2| (x) \\
&\quad + \LC CM+ {(CM)^2\over 2!}+{(CM)^3\over 3!}+\cdots \RC X'X |\nabla (f_1-f_2)(x)|,
\end{align*}
which leads to
\begin{align*} 
\|\nabla X'R f_1- \nabla X'Rf_2\|_{L^2(\Omega_1)} \notag \leq CMe^{CM} \|X'X |f_1 -f_1| \|_{L^2(\Omega_1)} + (	e^{CM}-1) \|X'X |\nabla f_1-\nabla f_2|\|_{L^2(\Omega_1)}.
\end{align*}
Now since $f_1-f_2$ is compactly supported in $\overline\Omega$, we apply Proposition~\ref{prop:stability} to get
$$
 \|X'X|f_1-f_2|\|_{L^2(\Omega_1)}\leq  \|X'X|f_1-f_2|\|_{H^1(\Omega_1)}\leq  C \|f_1-f_2\|_{L^2(\R^n)}
$$
and similarly,
$$
\|X'X |\nabla f_1-\nabla f_2|\|_{L^2(\Omega_1)} \leq  C \| \nabla (f_1-f_2) \|_{L^2(\R^n)}.
$$
Thus, the above inequalities imply that
\begin{equation}\label{EST:gradiant XR}
\begin{split}
&\hskip.5cm\|\nabla X'R f_1- \nabla X'Rf_2\|_{L^2(\Omega_1)}  \\
&\leq CMe^{CM}\|f_1-f_2\|_{L^2(\R^n)} +C(e^{CM}-1) \| \nabla (f_1-f_2) \|_{L^2(\R^n)}= :\mathcal{F}.
\end{split}
\end{equation}
Note that Proposition~\ref{prop:stability} yields that
$$
C^{-1} \|f_1-f_2\|_{L^2(\R^n)}\leq \|X'Xf_2 - X'Xf_1\|_{H^1(\Omega_1)} \leq C \|f_1-f_2\|_{L^2(\R^n)}
$$
Combining with the second inequality of \eqref{EST:L2K} and also \eqref{diff K}, \eqref{EST:gradiant XR}, we now have
\begin{align*}
   &\hskip.5cm\|Kf_1- Kf_2\|_{H^1(\Omega_1)} \\
   &\leq e^{CM} \|X'X|f_1 - f_2| \|_{L^2(\Omega_1)}+\|\nabla (X'Xf_2 - X'Xf_1 )\|_{L^2(\Omega_1)} + \mathcal{F}\\
   &\leq C(1+ e^{CM}+Me^{CM})\|f_1-f_2\|_{L^2(\R^n)} +C(e^{CM}-1) \| \nabla (f_1-f_2) \|_{L^2(\R^n)}.
\end{align*}

On the other hand, combining with the first inequality of \eqref{EST:L2K} and also \eqref{diff K}, \eqref{EST:gradiant XR}, we obtain the lower bound
\begin{equation}\label{EST:Klower}
\begin{split}
	     &\hskip.5cm\|Kf_1- Kf_2\|_{H^1(\Omega_1)} \geq \|X'X(f_1 -f_2) \|_{H^1(\Omega_1)} -(e^{CM}-1)\|X'X|f_1-f_2|\|_{L^2(\Omega_1)} - \mathcal{F} \\ 
	     &\geq (C^{-1} -C(e^{CM}-1)-CMe^{CM})\|f_1-f_2\|_{L^2(\R^n)}-C(e^{CM}-1) \| \nabla (f_1-f_2) \|_{L^2(\R^n)}.
\end{split}\end{equation}
Note that when $M$ is decreasing to zero, so is $e^{CM}-1$. It implies that $C^{-1} -C(e^{CM}-1)-CMe^{CM}$ is nonnegative if $M$ is small enough.
Therefore, if we fix some constant $c>0$, given any $f_1, f_2$ satisfying $\|f_1-f_2\|_{L^2(\mathbb R^n)}\geq c\|\nabla(f_1-f_2)\|_{L^2(\mathbb R^n)}$, then we can derive from \eqref{EST:Klower} that
\begin{align}\label{EST:Kfinal}
\widetilde{C} \|f_1-f_2\|_{L^2(\mathbb R^n)}\leq \|Kf_1-Kf_2\|_{H^1(\Omega_1)},
\end{align}
where the positive constant $\widetilde{C} = C^{-1} -C(1+1/c)(e^{CM}-1)-CMe^{CM}$, depending on $c,M$ with $M$ sufficiently small. 
\end{proof}

\begin{remark}
	We note that the stability estimate improves when the magnitude of $f$ becomes smaller. To explain this, we observe from \eqref{EST:Kfinal} that when $M$ is decreasing, the term $C^{-1}\|f_1-f_2\|_{L^2(\R^n)}$ on the right-hand side will dominate. Hence, the whole estimate becomes slightly stabler since the coefficient $\widetilde{C}$ is increasing. 
\end{remark}

Finally we make a comment on the connection between the stability estimate \eqref{EST:g} in Corollary~\ref{cor:stability} and the lower bound \eqref{EST:Klower}.
Indeed \eqref{EST:Klower} implies \eqref{EST:g} when either one of $f_j$ is zero.

More precisely, we take $f_1=\tau g\in \mathcal{M}_1$ with $\tau,\,g\geq 0$, and $f_2\equiv 0$ with $\widetilde{M}:=\|\tau g\|_{C^1(\R^n)}$. The estimate
\eqref{EST:Klower} yields that
\begin{align*}
\|K[\tau g]-K[0]\|_{H^1(\Omega_1)} \geq
(C^{-1} -C(e^{C\widetilde{M}}-1)-C\widetilde{M}e^{C\widetilde{M}})\|\tau g\|_{L^2(\R^n)}-C(e^{C\widetilde{M}}-1) \| \nabla (\tau g) \|_{L^2(\R^n)}.
\end{align*}
Dividing by $\tau$ and letting $\tau \rightarrow 0$ (then $\widetilde{M}\rightarrow 0$), one has
$$
\|\p_\tau|_{\tau=0} K[\tau g]\|_{H^1(\Omega_1)} \geq C^{-1} \|g \|_{L^2(\R^n)}.
$$

\subsection{Single pixel transform on Riemannian manifolds}

As mentioned in the introduction, the second author \cite{KDMV16} proved the uniqueness of the single pixel X-ray transform $K$ in the Euclidean space $\mathbb R^n$. In this section, we show that the proof for the Euclidean case can be generalized to the case of non-trivial geometries. 

Let $(M,g)$ be an $n$-dimensional, $n\geq 2$, compact non-trapping Riemannian manifold with smooth strictly convex boundary $\p M$. Here non-trapping means that every geodesic exits the manifold in both directions in finite times. Let $SM$ be the unit sphere bundle consisting of all unit vectors on $M$, so any $(x,v)\in SM$ satisfying $\|v\|_{g(x)}=1$. Given any $(x,v)\in SM$, let $\gamma_{x,v}$ be the geodesic with initial conditions $\gamma_{x,v}(0)=x$, $\dot\gamma_{x,v}(0)=v$. We define the X-ray transform of a function $f$ on $(M,g)$ as
$$X f(x,v)=\int f(\gamma_{x,v}(t))\, dt, \quad (x,v)\in SM.$$
Since $(M,g)$ is non-trapping, the above integral is indeed over a finite interval. Moreover, $Xf(x,v)=Xf(\gamma_{x,v}(t),\dot\gamma_{x,v}(t))$ for all $t$. Then the single pixel X-ray transform on $(M,g)$ is defined by
$$K f (x)=\int_{S_xM} e^{-Xf(x,v)} \,dv,\quad x\in M.$$
Let $C(M)$ be the space of continuous functions on $M$.
\begin{theorem}
	Let $(M,g)$ be a compact non-trapping Riemannian manifold with smooth strictly convex boundary. Assume that the X-ray transform $X$ is injective on $C(M)$, then the single pixel X-ray transform $K$ is injective on $C(M)$, in other words, $Kf=Kh$ implies that $f=g$ for $f,g\in C(M)$.
\end{theorem}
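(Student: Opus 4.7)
The plan is to adapt the Euclidean strategy of \cite{KDMV16}, where the monotonicity of $s\mapsto e^{-s}$ reduces injectivity of $K$ to injectivity of $X$, by replacing the explicit parameterization of lines by $\theta^\perp\times\mathbb S^{n-1}$ with Santal\'o's formula on a compact non-trapping manifold with strictly convex boundary. Suppose $Kf=Kh$ for $f,h\in C(M)$; the goal is to show $Xf=Xh$ on every geodesic and then invoke injectivity of the geodesic X-ray transform.

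First I would multiply the identity $Kf-Kh\equiv 0$ by $f-h$ and integrate over $M$. Unpacking the definition of $K$, this produces an integral over $SM$ against the Liouville measure $d\Sigma$:
\[
0=\int_M (Kf-Kh)(f-h)\,dV_g=\int_{SM}(f(\pi(x,v))-h(\pi(x,v)))\bigl(e^{-Xf(x,v)}-e^{-Xh(x,v)}\bigr)\,d\Sigma(x,v),
\]
where $\pi:SM\to M$ is the footpoint projection.

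Next I would apply Santal\'o's formula. The key observation that makes this step work is that $e^{-Xf(x,v)}-e^{-Xh(x,v)}$ is invariant under the geodesic flow $\phi_t$, because $Xf$ and $Xh$ are constant along geodesics. Meanwhile, integrating the factor $f-h$ along the geodesic from an inward boundary vector $(x,v)\in\partial_+ SM$ to its exit point produces exactly $X(f-h)(x,v)$. Santal\'o's formula therefore converts the integral above into an integral over the incoming boundary:
\[
0=\int_{\partial_+ SM}\bigl(Xf(x,v)-Xh(x,v)\bigr)\bigl(e^{-Xf(x,v)}-e^{-Xh(x,v)}\bigr)\,|\langle v,\nu\rangle|\,d\mu(x,v),
\]
where $\nu$ is the inward unit normal to $\partial M$ and $d\mu$ the induced surface measure on $\partial_+ SM$.

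Finally, since $s\mapsto e^{-s}$ is strictly decreasing, the integrand $(a-b)(e^{-a}-e^{-b})$ is pointwise nonpositive and vanishes only when $a=b$. The vanishing of the integral then forces $Xf(x,v)=Xh(x,v)$ for every $(x,v)\in\partial_+ SM$; as every maximal geodesic in $M$ is parametrized by such an initial condition, this gives $Xf=Xh$ on all of $SM$, and the assumed injectivity of $X$ on $C(M)$ yields $f=h$. The only nontrivial step is the correct bookkeeping in Santal\'o's formula, namely checking that the flow-invariance of $e^{-Xf}-e^{-Xh}$ exactly matches the integration along geodesics needed to produce the difference $Xf-Xh$; once this is verified, the sign argument is immediate and the rest is standard.
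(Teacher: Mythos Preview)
Your proposal is correct and follows essentially the same argument as the paper: both pair $Kf-Kh$ against $f-h$, use Santal\'o's formula together with the flow-invariance of $e^{-Xf}-e^{-Xh}$ to reduce to an integral over $\partial_+SM$ of $(Xf-Xh)(e^{-Xf}-e^{-Xh})$, and then invoke the strict monotonicity of $s\mapsto e^{-s}$ to conclude $Xf=Xh$. The only cosmetic difference is that you write the boundary weight $|\langle v,\nu\rangle|$ explicitly, whereas the paper absorbs it into the measure $d\mu$ on $\partial_+SM$.
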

\begin{proof}
	Consider 
	\begin{align*}
     \LA Kf-Kh, f-h\RA= & \int_M (Kf-Kh)(x)(f-h)(x)\, dx\\
     = & \int_{SM} (e^{-Xf(x,v)}-e^{-Xh(x,v)}) (f-h)(x)\, dxdv\\
     = & \int_{\p_+SM} \, d\mu(x,v)\int (e^{-Xf(\gamma_{x,v}(t),\dot\gamma_{x,v}(t))}-e^{-Xh(\gamma_{x,v}(t),\dot\gamma_{x,v}(t))}) (f-h)(\gamma_{x,v}(t))\, dt.
	\end{align*}
	The last equality is a direct application of the Santalo's formula, see e.g. \cite[Lemma 3.3.2]{SharafutdinovNote}. Here $\p_+SM$ is the set of all unit inward pointing vectors on the boundary $\p M$, $d\mu$ is a measure on $\p_+SM$ which vanishes in directions tangent to $\p M$.
	
	Notice that $Xf(\gamma_{x,v}(t),\dot\gamma_{x,v}(t))$ is invariant w.r.t. $t$, thus
	\begin{align*}
	\LA Kf-Kh, f-h\RA= & \int_{\p_+SM}(e^{-Xf(x,v)}-e^{-Xh(x,v)}) \, d\mu(x,v)\int (f-h)(\gamma_{x,v}(t))\, dt\\
	=&  \int_{\p_+SM}(e^{-Xf(x,v)}-e^{-Xh(x,v)}) (Xf(x,v)-Xh(x,v))\, d\mu(x,v).
	\end{align*}
	The integrand in the last integral has the form $(e^{-u}-e^{-v})(u-v)$, which is non-positive, and it equals zero if and only if $u=v$. Therefore, if $Kf=Kh$, we have that $Xf(x,v)=Xh(x,v)$ for all $(x,v)\in\p_+SM$. By our assumption, the X-ray transform $X$ is injective, thus $f=h$.
\end{proof}

It is known that the X-ray transform is injective on {\it simple} manifolds \cite{Mu77, MuRo78}, which are compact non-trapping manifolds with strictly convex boundary and free of conjugate points. In dimension $\geq 3$, $X$ is injective on compact non-trapping manifolds with strictly convex boundary, which admit convex foliations \cite{UV2016}. The convex foliation condition allows the existence of conjugate points.

In the current paper, we only consider the uniqueness of $K$ on Riemannian manifolds. It's reasonable to expect that stability estimates similar to Theorem \ref{thm:stability} will hold on simple manifolds as well. In particular, stability estimates of the normal operator $X'X$ on simple manifolds can be found in \cite{SU2004}.

\section{Numerical experiments}
In this section, we conduct several numerical experiments to corroborate our theoretical results above. We use the Shepp-Logan phantom with  $101\times 101$ pixels for illustrations.
 
Assume $g$ is the measured data. More precisely, $g(x)$ is the single pixel X-ray transform of $f$ at the point $x$. For all the numerical tests below, $g$ has the same resolution as $f$.
To reconstruct $f$ from the data $g$, we use Gauss-Newton method to minimize the following functional:
\[
\arg\min_f\|Kf-g\|_{L^2}.
\]
To compute the gradient of the above functional, note that for
\[
Kf(x)=\int_{\mathbb{S}^{n-1}}e^{-Xf(x,\theta)}\mathrm{d}\theta,
\]
one can see that the Frech\'et derivative of $K$ at $f$ is given as
\[
K'[f](h)(x)=\int_{\mathbb{S}^{n-1}}e^{-Xf(x,\theta)}(-Xh(x,\theta))\mathrm{d}\theta
\]
for any function $h$. For computation of the X-ray transform $Xf$, we adapt the code provided in Carsten H{\o}ilund's lecture notes \cite{hoilund2007radon}.\\

We first reconstruct $f$ when the data $g$ is not noised. The results are shown in Figure \ref{N1}. The true image of Shepp-Logan is in the middle. The image of $Kf$ is on the left. 
\begin{figure}[ht]
\centering
\includegraphics[height=4.1cm]{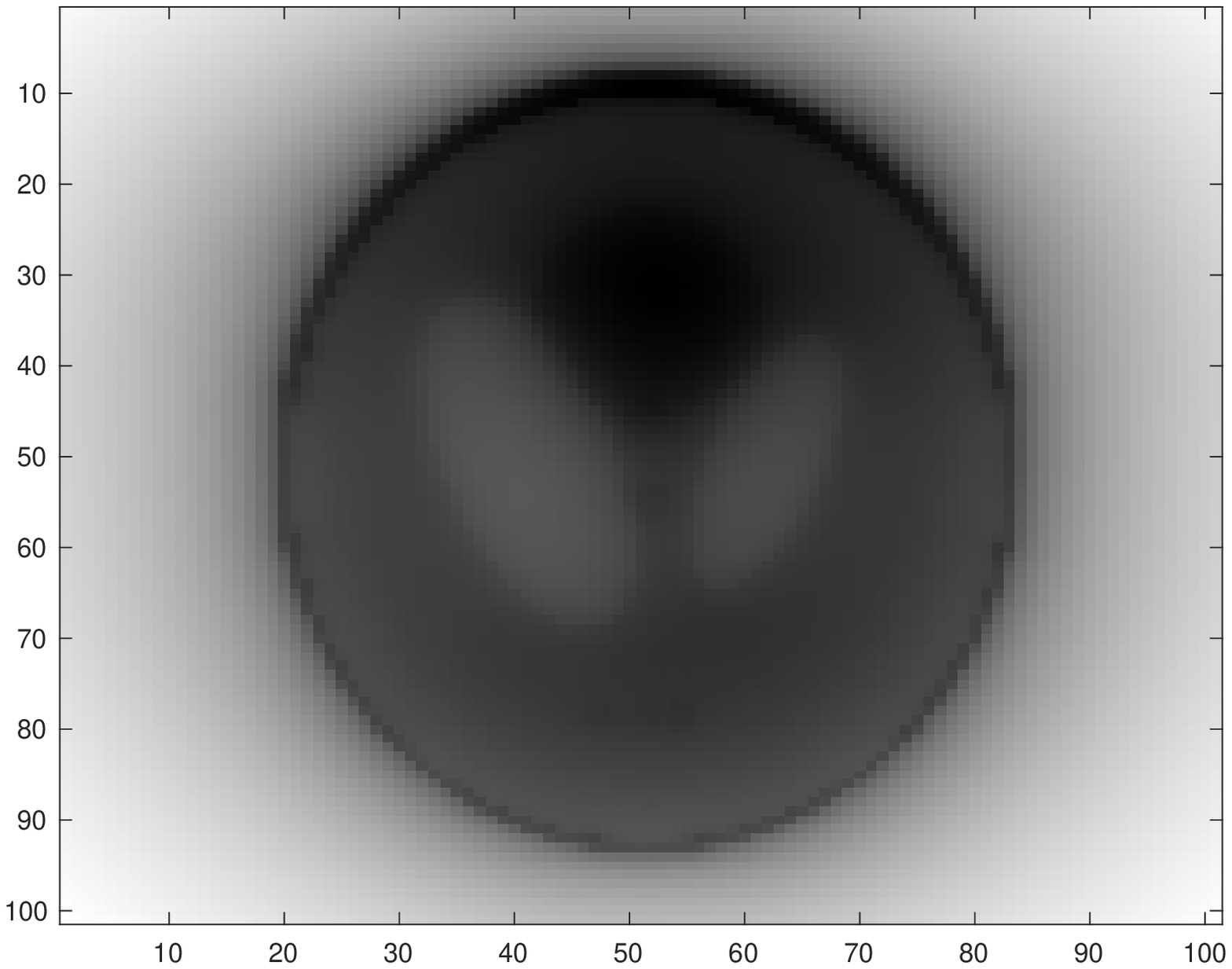}~
\includegraphics[height=4.1cm]{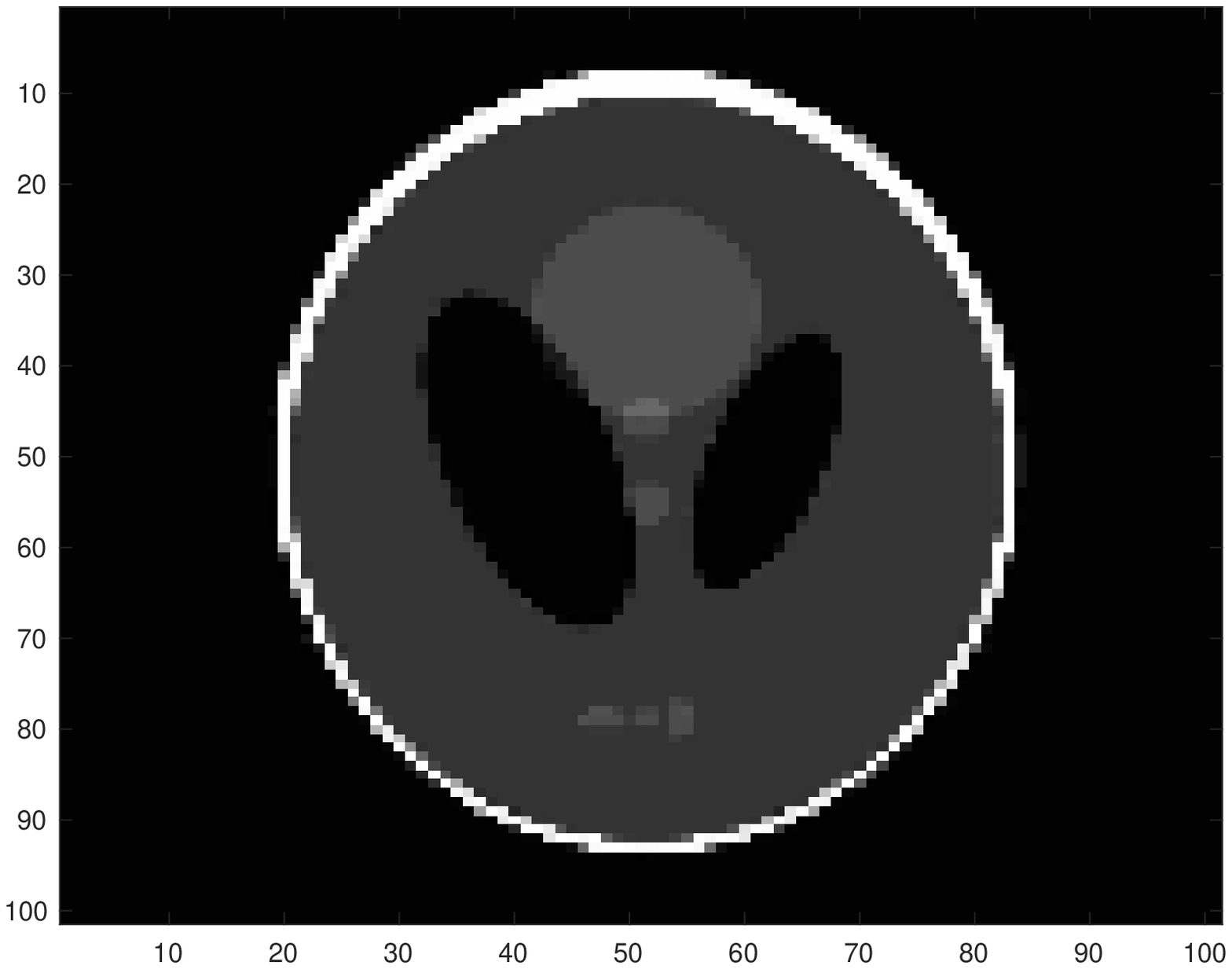}~
\includegraphics[height=4.1cm]{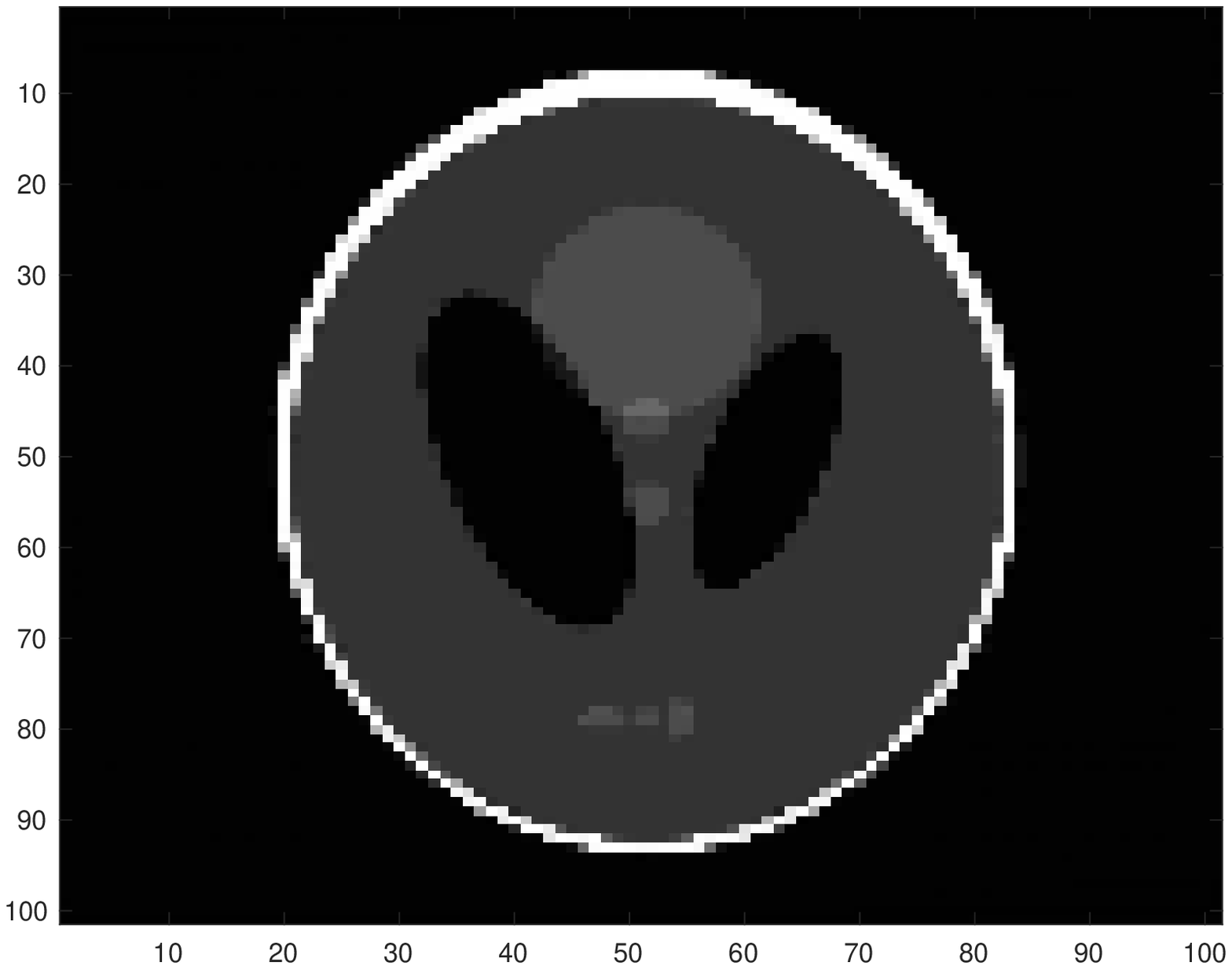}
\caption{left: true data; middle: true image, right: reconstructed image}\label{N1}
\end{figure}

Next we reconstruct $f$ with $g$ polluted by Gaussian random noises. The noise level is compared with $Kf-2\pi$, not $Kf$ itself, since when $f=0$, $Kf=2\pi$ in $\R^2$. Notice that adding noise directly to $Kf$ will lead to a complete loss of information for small $f$'s. The results are presented in Figure \ref{E2}, where the reported error is measured in $L^2$ norm. One can see that the magnitude of error is almost linearly dependent on the level of noise. This confirms the Lipschitz stability result in Theorem \ref{thm:stability}.
\begin{figure}[h]
\centering
    \begin{subfigure}[t]{0.32\textwidth}
      \includegraphics[width=\textwidth]{mag2recon100noise0minus1.eps}
      \caption{the reconstruction with no noise}
    \end{subfigure}
    \begin{subfigure}[t]{0.32\textwidth}
      \includegraphics[width=\textwidth]{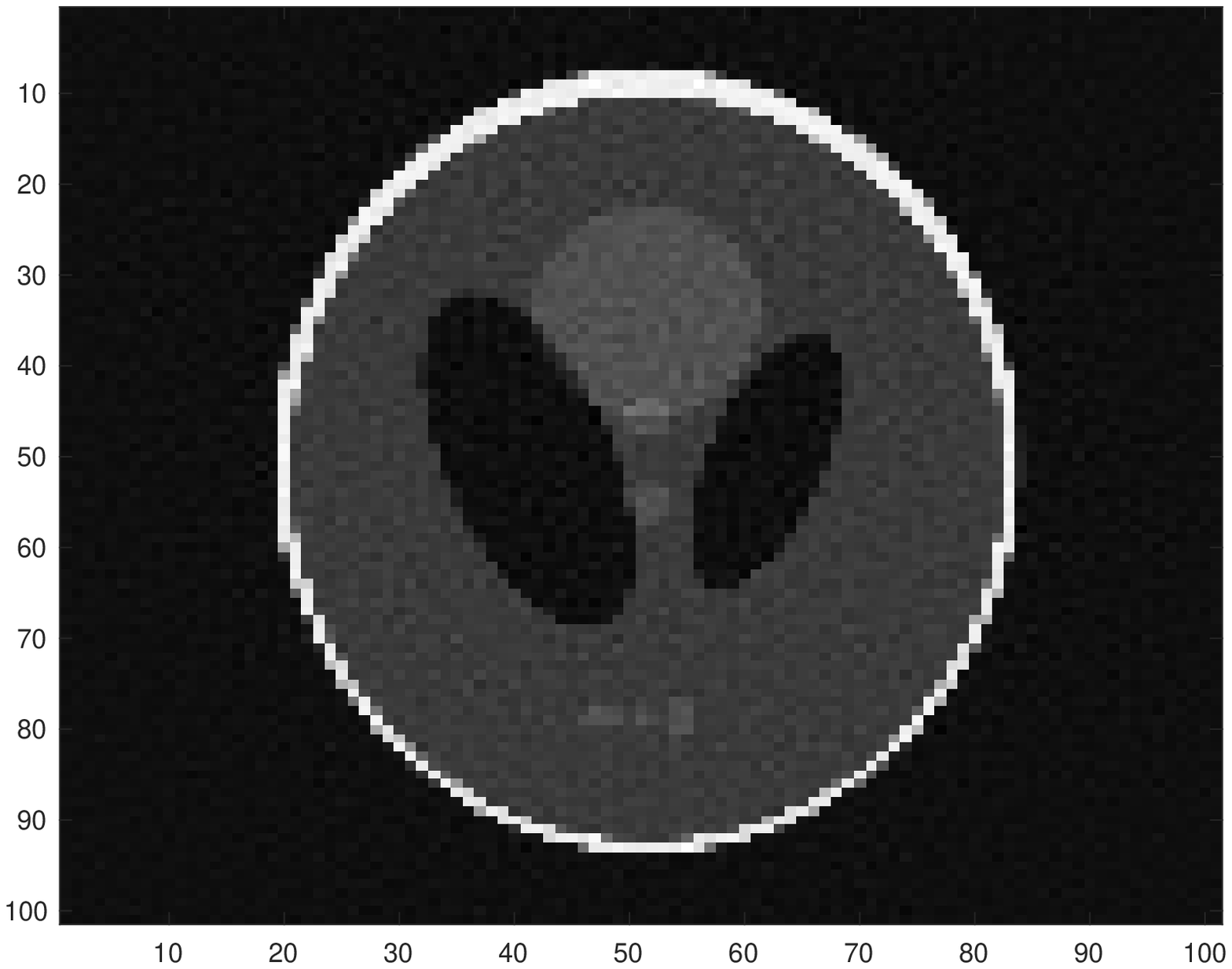}
      \caption{the reconstruction with $0.1\%$ relative noise; the error is $0.7289$}
    \end{subfigure}\\
    
    \begin{subfigure}[t]{0.32\textwidth}
      \includegraphics[width=\textwidth]{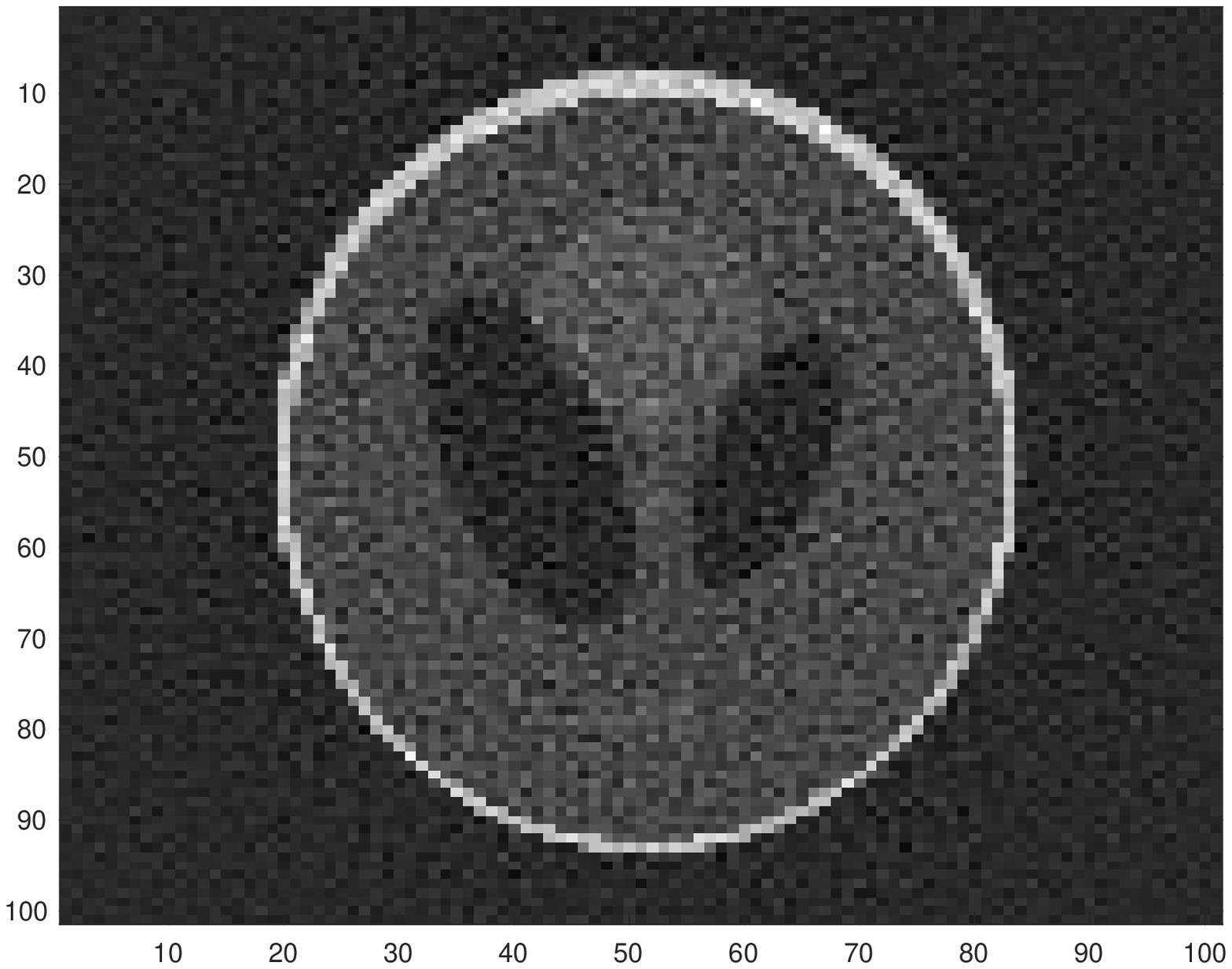}
      \caption{the reconstruction with $0.5\%$ relative noise; the error is $3.5906$}
    \end{subfigure}
%
    \begin{subfigure}[t]{0.32\textwidth}
      \includegraphics[width=\textwidth]{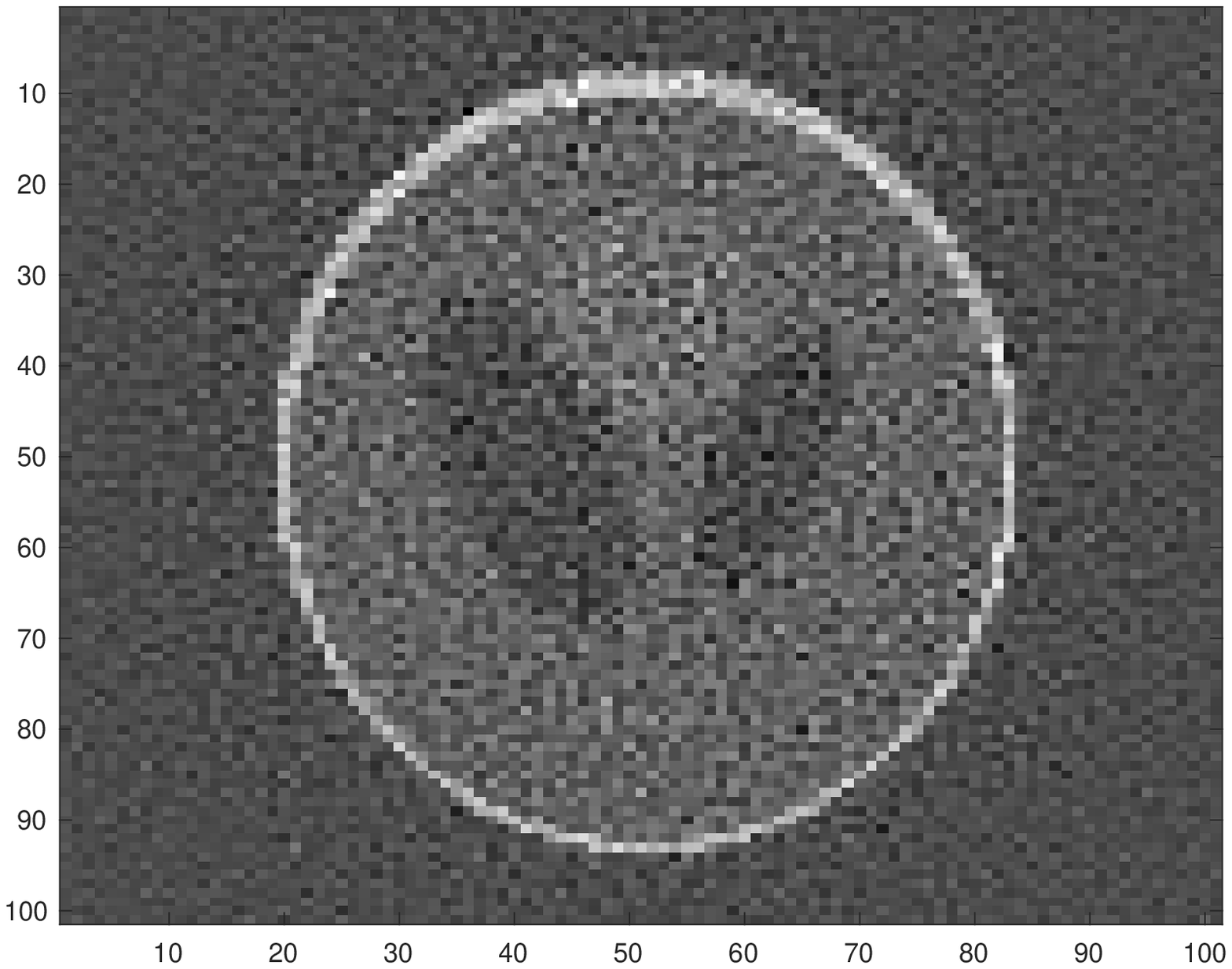}
      \caption{the reconstruction with $1\%$ relative noise; the error is $7.1819$}
    \end{subfigure}

    \caption{Recovery with different noises}\label{E2}
  \end{figure}

Finally, we reconstruct the same Shepp-Logan phantom with different magnitudes of $f$. Although we do not manually add noises, computation itself generates noises. The results are displayed in Figures~\ref{EWN}. One can see that for both $f$ and $10f$, the reconstructions perform quite well, 
while the algorithm takes more steps to converge for the case $10f$. Moreover, Figure~\ref{EWN} shows that the quality of image deteriorates if the magnitude of $f$ becomes even larger. For $40f$, the reconstruction is already a total failure. This result suggests that the Lipschitz stability derived for small $f$'s no longer holds for large ones.

  
   \begin{figure}[h]
  \centering
    \begin{subfigure}[t]{0.32\textwidth}
      \includegraphics[width=\textwidth]{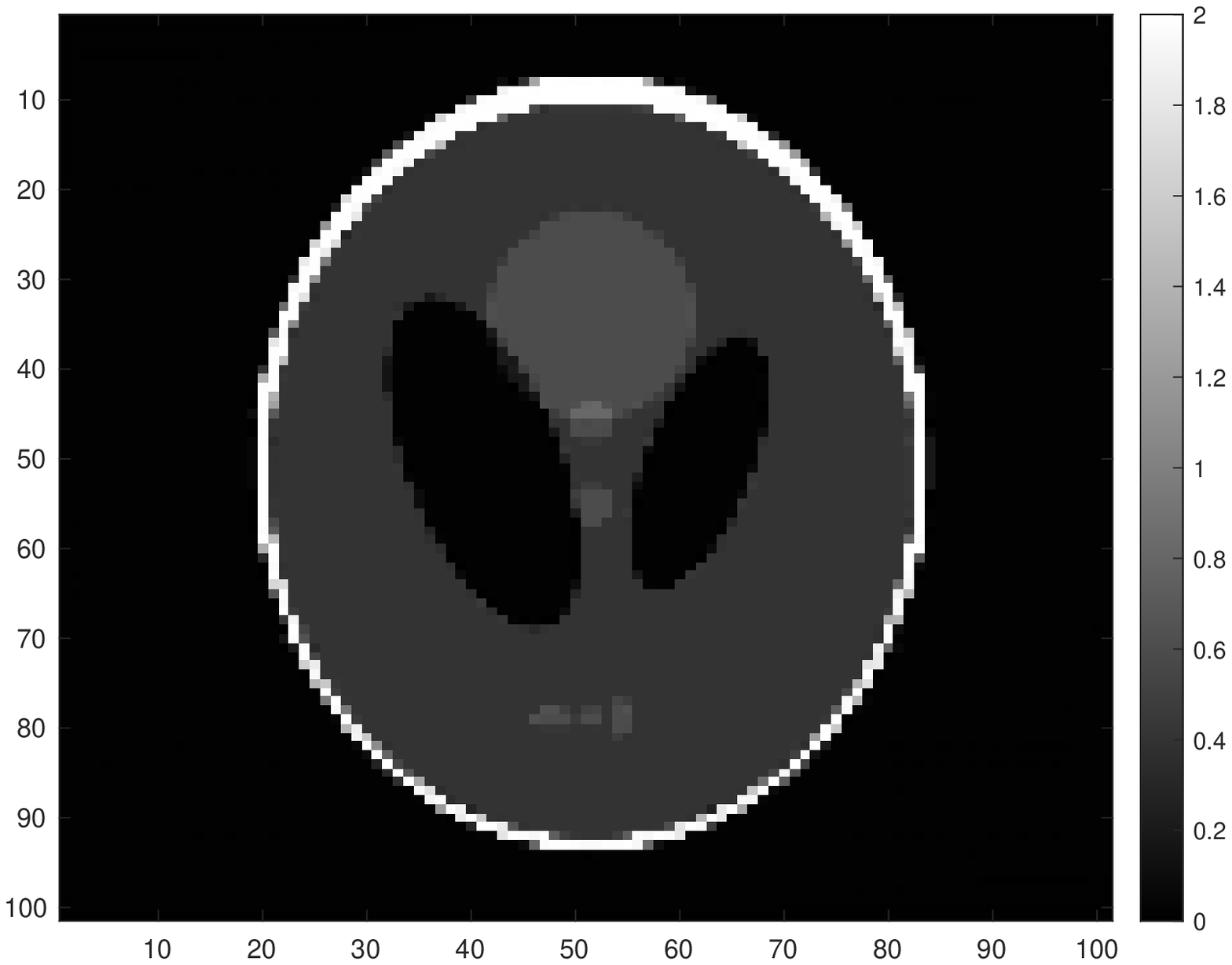}
      \caption{reconstruction for $f$ with no noise}
      \end{subfigure}
       \begin{subfigure}[t]{0.32\textwidth}
      \includegraphics[width=\textwidth]{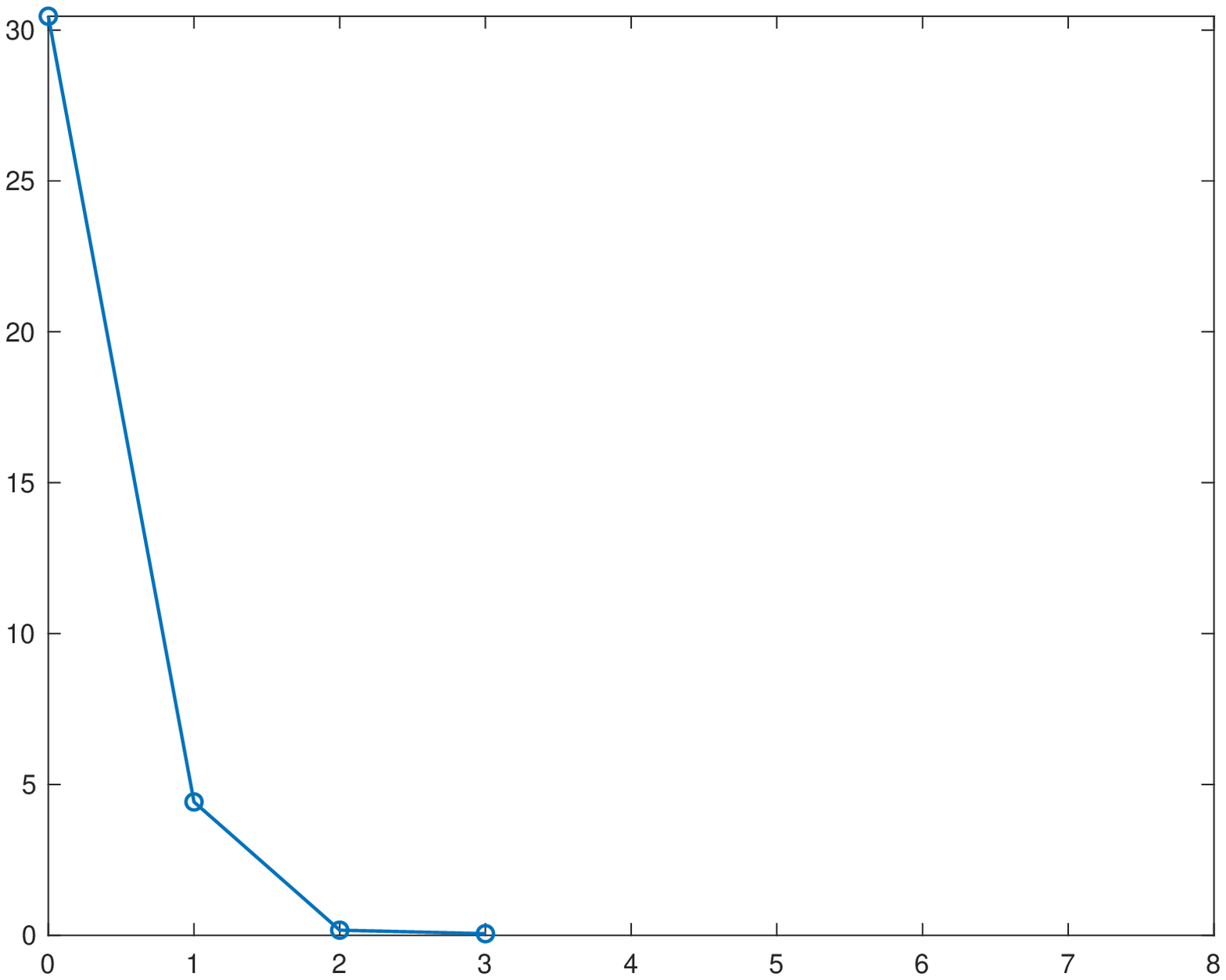}
      \caption{errors versus steps}
      \end{subfigure}
      
    \begin{subfigure}[t]{0.32\textwidth}
      \includegraphics[width=\textwidth]{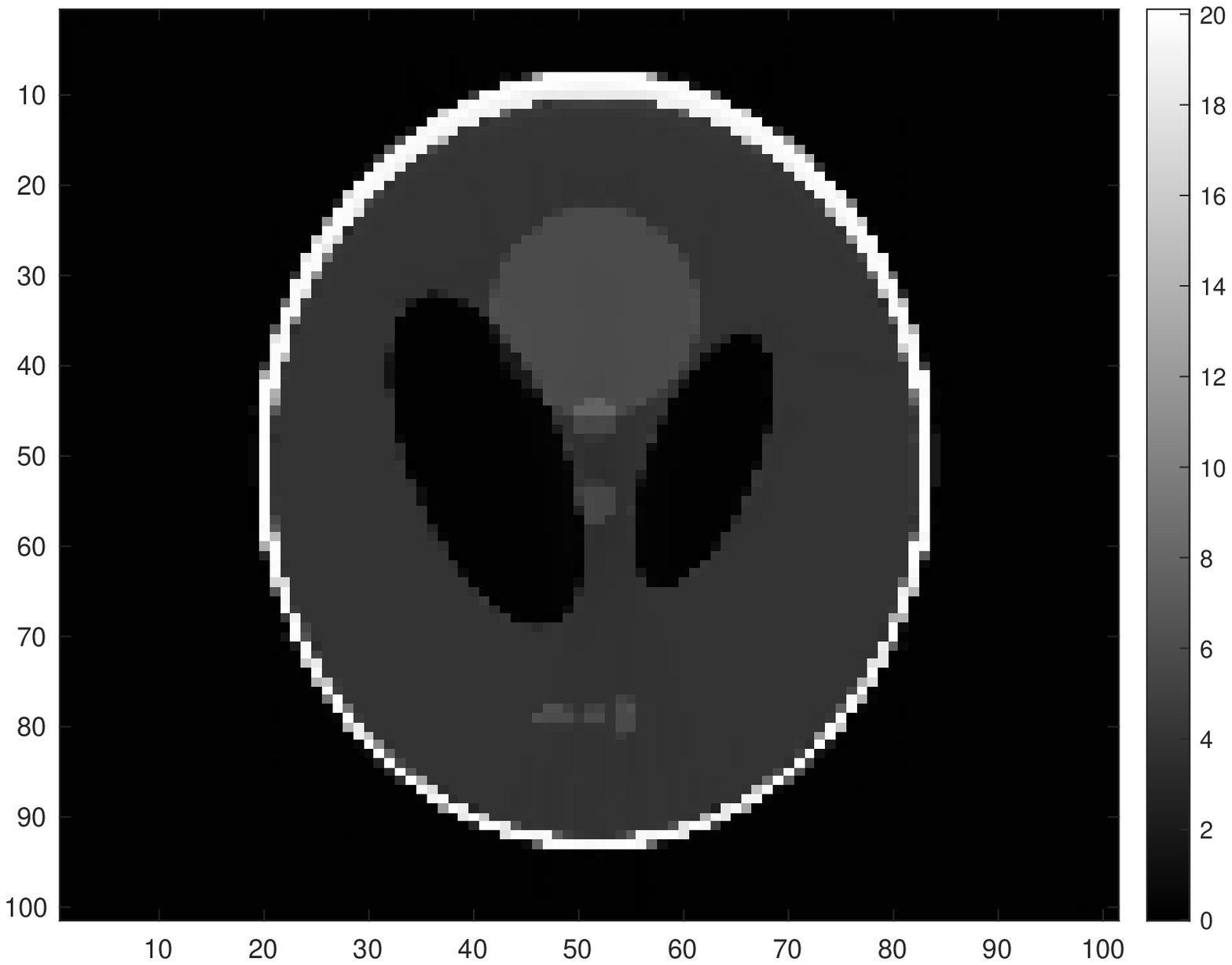}
      \caption{reconstruction for $10f$ with no noise}
       \end{subfigure}
         \begin{subfigure}[t]{0.32\textwidth}
      \includegraphics[width=\textwidth]{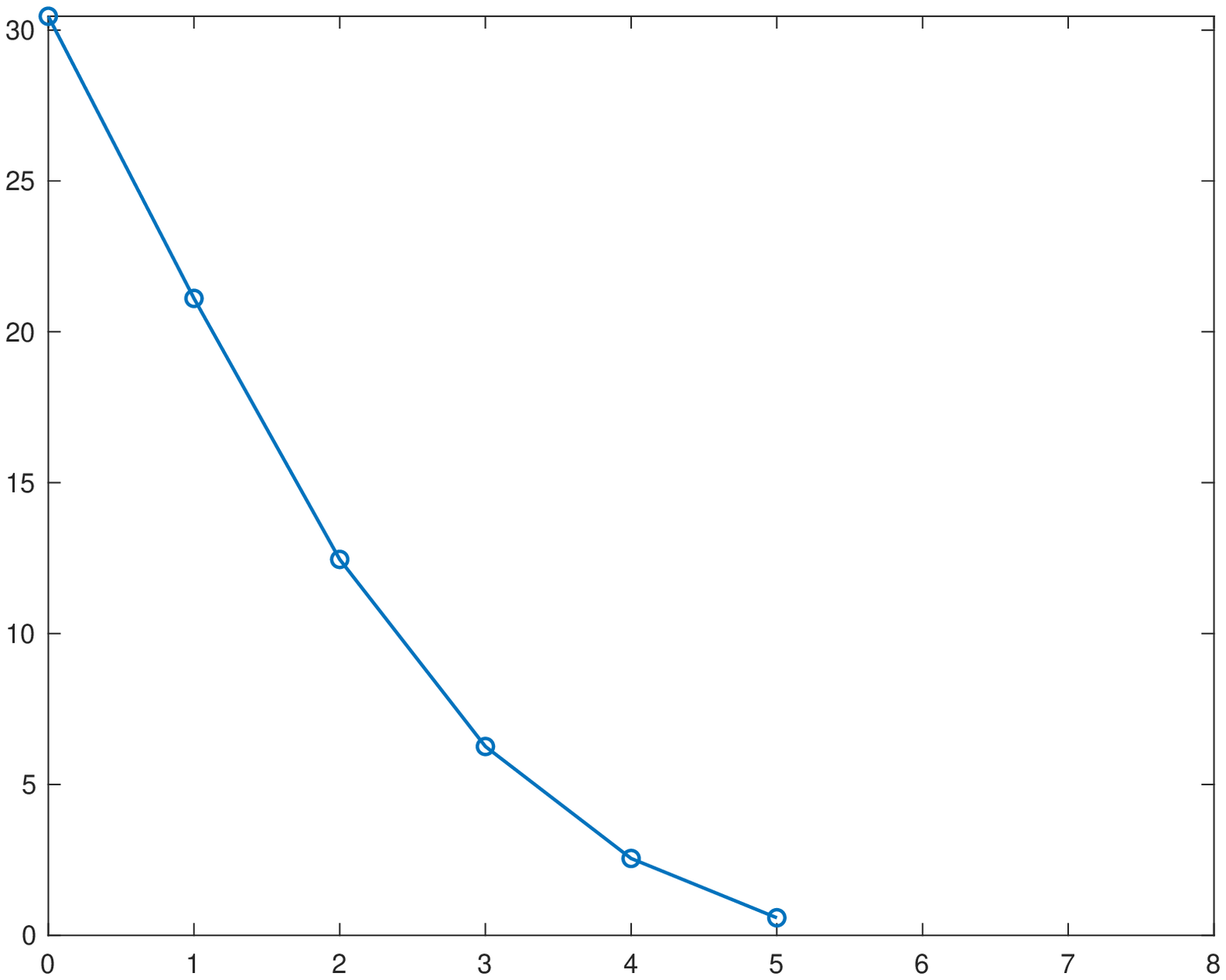}
      \caption{errors/10 versus steps}
      \end{subfigure}
      
    \begin{subfigure}[t]{0.32\textwidth}
      \includegraphics[width=\textwidth]{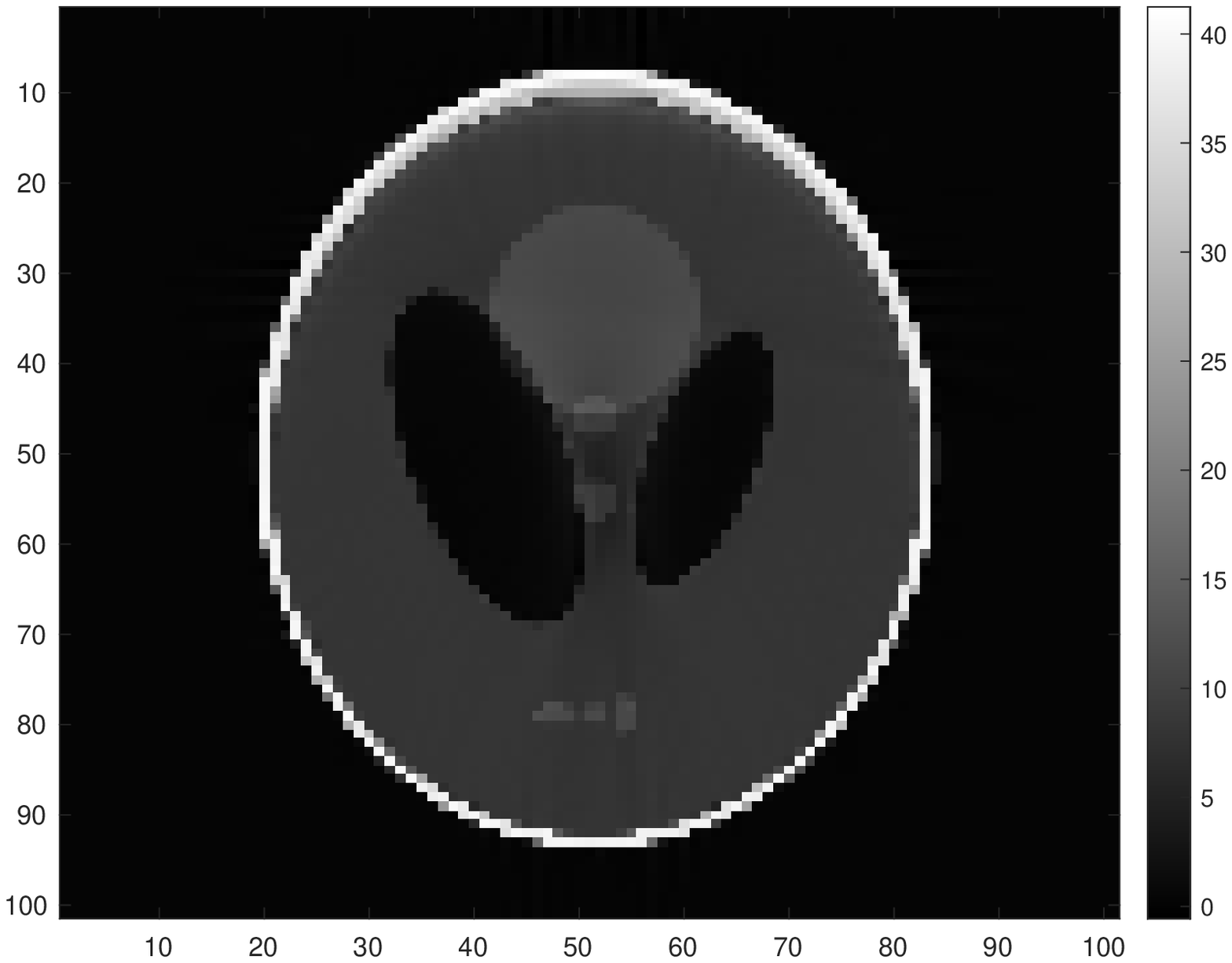}
      \caption{reconstruction for $20f$ with no noise}
    \end{subfigure}
      \begin{subfigure}[t]{0.32\textwidth}
      \includegraphics[width=\textwidth]{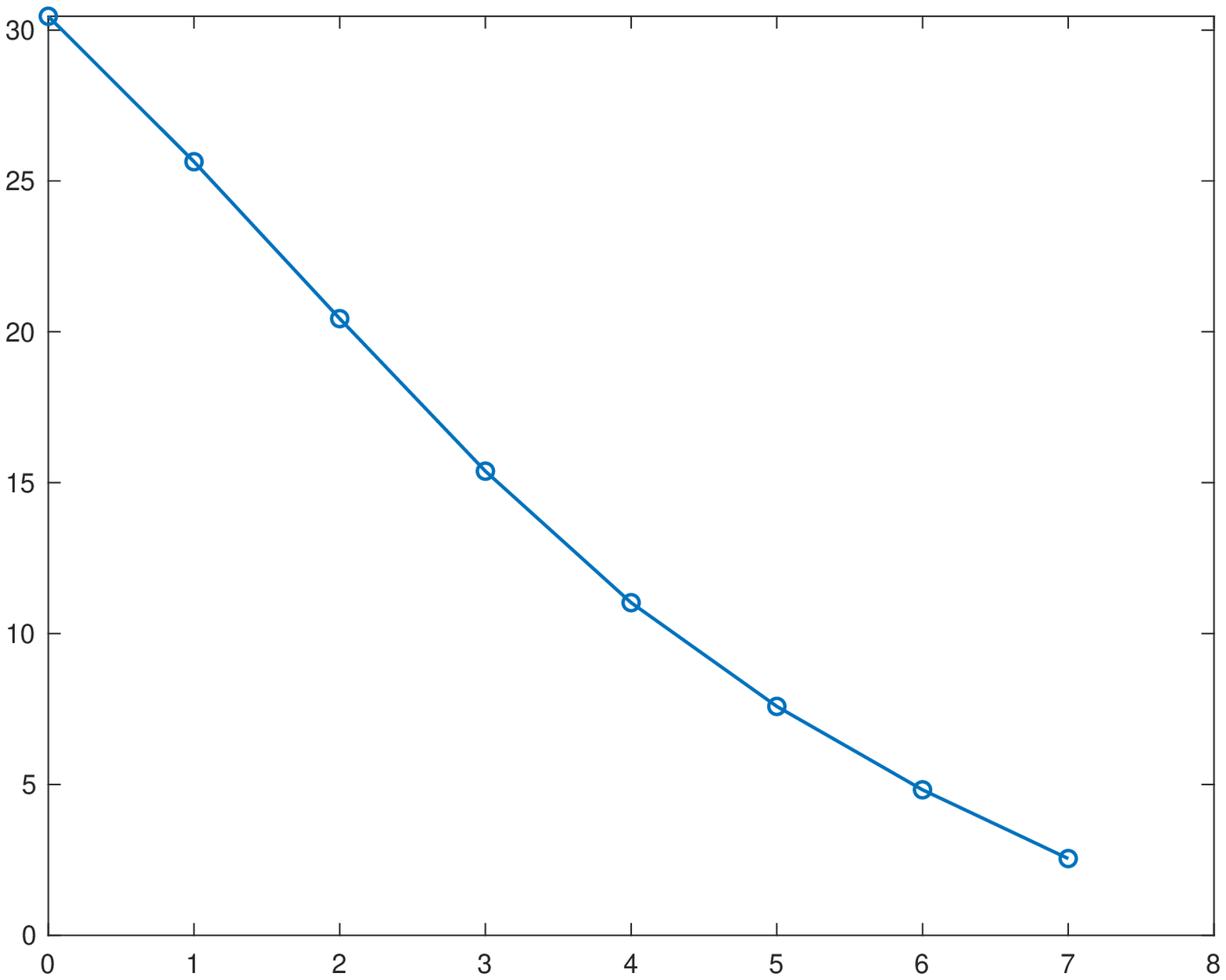}
      \caption{errors/20 versus steps}
      \end{subfigure}
    
    \begin{subfigure}[t]{0.32\textwidth}
      \includegraphics[width=\textwidth]{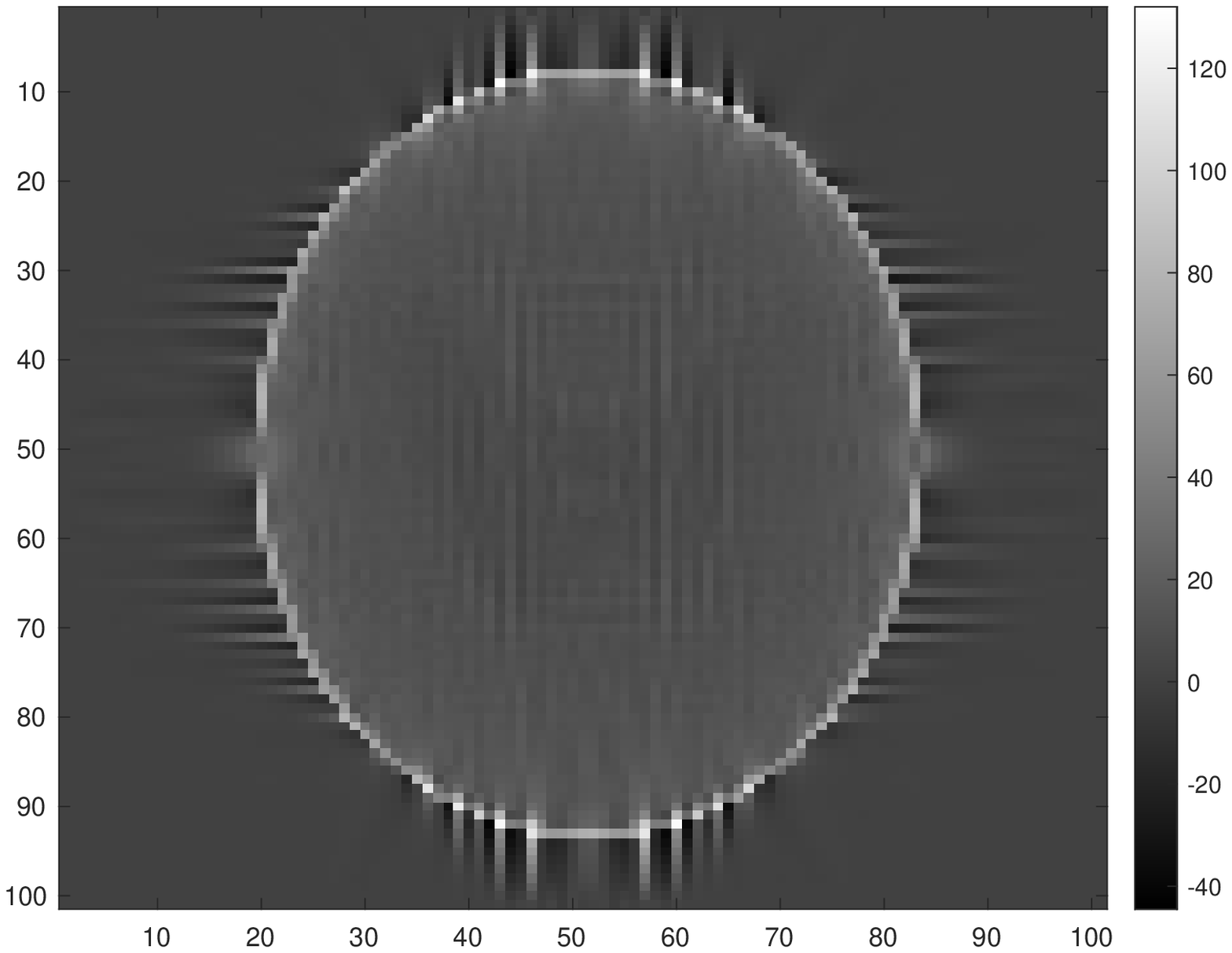}
      \caption{reconstruction for $40f$ with no noise}
      \end{subfigure}
        \begin{subfigure}[t]{0.32\textwidth}
      \includegraphics[width=\textwidth]{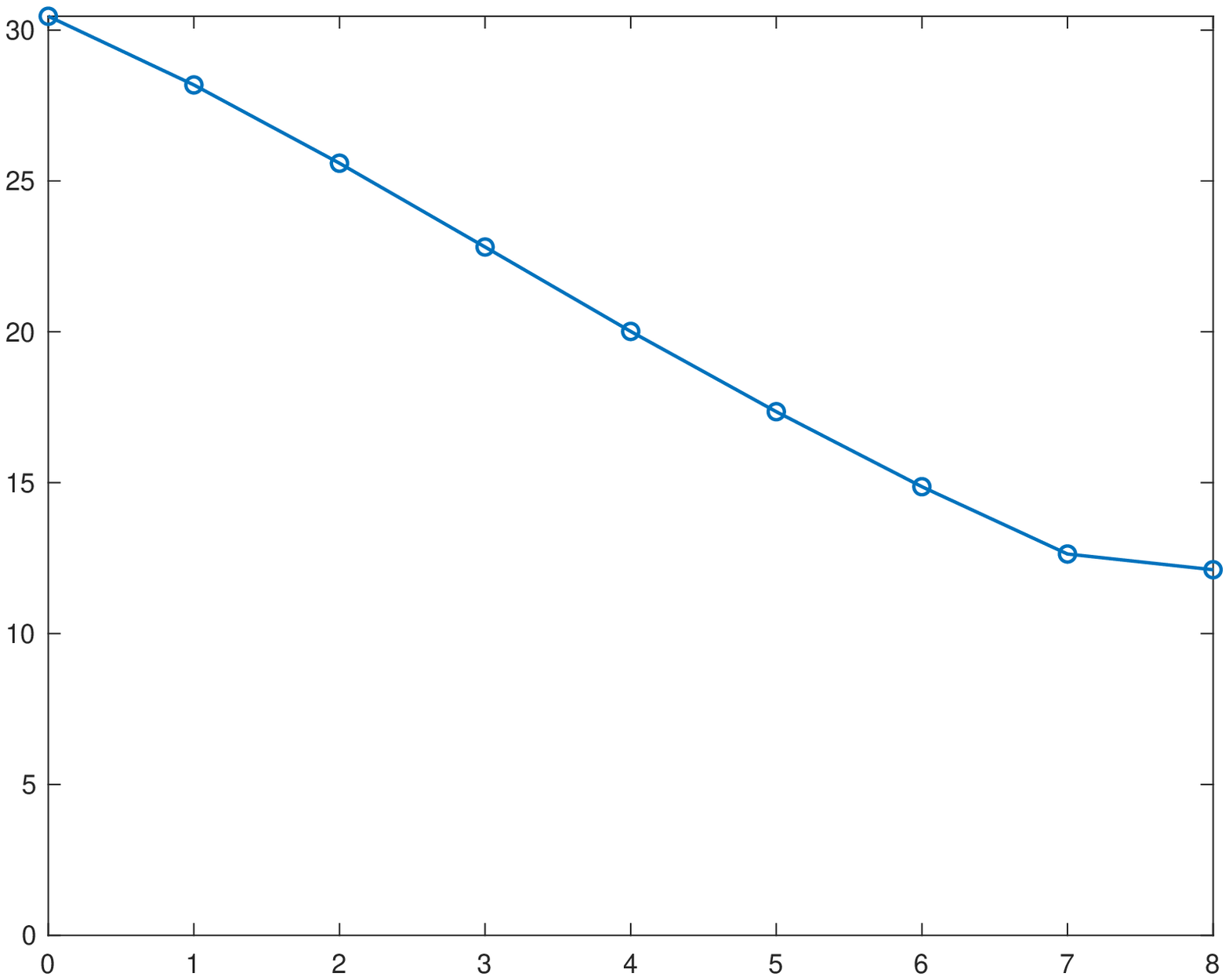}
      \caption{errors/40 versus steps}
      \end{subfigure}

    \caption{Reconstruction without noise}\label{EWN}
  \end{figure} 
%
%
%

\section*{Acknowledgments}
RL is partially supported by the National Science Foundation (NSF) through grant DMS-2006731. GU is partly supported by NSF, a Simons Fellowship, a Walker Family Professorship at UW, and a Si Yuan Professorship at IAS, HKUST. Part of this research was performed while GU was visiting the Institute for Pure and Applied Mathematics (IPAM) in Fall 2021. HZ is partly supported by NSF grant DMS-2109116.

\bibliographystyle{plain}
\bibliography{Xray}

\end{document}